%
%
%
%
\documentclass{amsart}

\usepackage{amsmath, amssymb, amsthm}
\usepackage{mathrsfs}
\usepackage{graphicx}
\usepackage{graphics}
\usepackage{amsfonts}
\usepackage{color}
\usepackage{hyperref}

\newtheorem{theorem}{Theorem}
\newtheorem{lemma}[theorem]{Lemma}

\theoremstyle{definition}

\newtheorem{question}[theorem]{Question}

\newtheorem{conjecture}[theorem]{Conjecture}

\theoremstyle{remark}
\newtheorem{remark}[theorem]{Remark}

\numberwithin{equation}{section}


\newcommand{\cocf}{$co\mathcal{CF}\,$}

\newcommand{\Z}{\mathbb{Z}}

\newcommand{\LF}{\mathcal{L}}

\newcommand{\quatT}{{\mathrm{QAut}}(\mathcal{T}_{2,c})}

\newcommand{\cT}{\mathcal{T}_{2,c}}
\newcommand{\supp}{\mathrm{Supp}}

\newcommand{\words}{\left\{0,1\right\}^\ast}
\newcommand{\CS}{\mathfrak{C}}


\begin{document}

\title{Embeddings into Thompson's group $V$ and \cocf groups}

\author{Collin Bleak}
\address{School of Mathematics and Statistics, University of St Andrews,
Mathematical Institute, North Haugh, St Andrews, KY16 9SS, Scotland, UK}
\email{collin@mcs.st-and.ac.uk}

\author{Francesco Matucci}
\address{D\'epartement de Math\'ematiques,
B\^atiment 425, bureau 21,
Facult\'e des Sciences d'Orsay, Universit\'e Paris-Sud 11,
F-91405 Orsay, France}
\email{francesco.matucci@math.u-psud.fr}

\author{Max Neunh\"offer}
\address{School of Mathematics and Statistics, University of St Andrews,
Mathematical Institute, North Haugh, St Andrews, KY16 9SS, Scotland, UK}
\email{neunhoef@mcs.st-and.ac.uk}

\subjclass[2000]{Primary XYZ; Secondary XYZ}



\keywords{Context-free groups, Quasi-automorphisms of a tree, Thompson's group $V$, Baumslag-Solitar groups}
\begin{abstract}
Lehnert and Schweitzer show in \cite{lehnschweitz1}
that R. Thompson's group $V$ is a co-context-free 
(\cocf) group, thus implying that all of its finitely generated
subgroups are also \cocf groups.
Also, Lehnert shows in his thesis that $V$ embeds inside 
the \cocf group $\mathrm{QAut}(\mathcal{T}_{2,c})$, which is a group of particular bijections on the vertices of an infinite binary $2$-edge-colored tree, and he conjectures that $\mathrm{QAut}(\mathcal{T}_{2,c})$
is a universal \cocf group. We show that $\mathrm{QAut}(\mathcal{T}_{2,c})$ embeds into $V$, and thus obtain a new form for Lehnert's conjecture.
Following up on these ideas, we begin work to build a representation theory into R. Thompson's group $V$.  In particular we classify precisely which Baumslag-Solitar groups
embed into $V$.
\end{abstract}

\maketitle

\section{Introduction}
\subsection{History and context}
There has been a long historical interplay between classes of formal languages, and classes of groups.  This connection was first made by Max Dehn, who in 1911 stressed the importance of several formal problems associated with group presentations, one of which was the word problem for groups (Given a group $G$, is there an algorithm which determines, in finite time, whether or not any given finite product of generators is trivial?).  Note that when one looks at the set of ``words'' in the generators which are equivalent to the identity in the group one has specified a formal language.  The more complicated this formal language is, the more complex an algorithm would have to be in order to positively answer Dehn's question.

Thus, as classes of languages become more complex, some ``corresponding'' classes of groups become wider, and thus harder to comprehend in a meaningful fashion.  Ways to build these correspondences are through the word or co-word problems for groups, but other flavours of correspondence have also been seen (see, e.g., \cite{anisimov,muller-schupp-2, holtrerotho,holt-owens-thomas, horerocfconj,horoind}).  To date, classifications of corresponding sets of groups (for classes of languages) only exist for very simple classes of languages, but the results on the group theory side are quite striking.  In order to discuss this further, we need to give a definition.

Given a finitely generated group $G=\langle X \rangle$, one can define the \emph{language of the
word problem} to be the set of words
\[
WP(G)=\{w \in F(X) \mid w \equiv_G 1 \}.
\]
Similarly, the \emph{language of the co-word problem} is defined to be
\[
coWP(G)=\{w \in F(X) \mid w \not \equiv_G 1 \}.
\]
And now, let us state some results on the group theory side.

In 1972, Anisimov in \cite{anisimov} shows that for a finitely generated group $G$,
$WP(G)$ is a regular language if and only if $G$ is finite.  The proof of this is not difficult, but the idea of building the correspondence in the first place represents quite a step forward.  Later,  in the early 1980's, a celebrated collection of papers of Muller and Schupp (relying on Dunwoody's accessibility theory - see \cite{dunwoody1, muller-schupp-1, muller-schupp-2}) show the following theorem.

\begin{theorem} [Muller, Schupp]
Let $G$ be a finitely generated group.  Then, $WP(G)$ is a context-free language 
if and only if $G$ is virtually free.
\end{theorem}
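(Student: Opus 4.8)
The plan is to prove the two implications separately. I first note the standard preliminary that, for a finitely generated group, the property ``$WP(G)$ is context-free'' does not depend on the chosen finite generating set (context-free languages are closed under inverse homomorphism and intersection with regular languages), so we may freely change generators and pass to commensurable groups.

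\emph{The ``if'' direction.} Suppose $G$ is virtually free. I would first treat $G = F$, a free group of finite rank with basis $X$: a (deterministic) pushdown automaton accepts $WP(F)$ by reading the input letter by letter while maintaining on its stack the freely reduced form of the prefix read so far --- pushing a letter, or popping when it meets the inverse of the stack top --- and accepting exactly when the input is exhausted and the stack is empty. For the general case, let $H\le G$ be free of finite index and fix a transversal. The key point is that the class of finitely generated groups with context-free word problem is closed under passing to finite-index overgroups: a pushdown automaton for $WP(G)$ keeps the current $H$-coset in its finite control and, each time the accumulated group element returns to $H$, feeds the corresponding $H$-word to a simulated copy of the automaton for $WP(H)$; combined with the closure properties above this yields context-freeness of $WP(G)$. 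Since finite groups and virtually $\Z$ are trivially virtually free, this handles all cases.

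\emph{The ``only if'' direction.} Assume $WP(G)$ is context-free with respect to a generating set $X$, and set $\Gamma = \Gamma(G,X)$. The technical heart is to extract geometry from the grammar: via a pumping/interchange argument on context-free derivations (equivalently, a van Kampen triangulation argument) one shows that $\Gamma$ satisfies the Muller--Schupp ``context-free graph'' condition --- there is a constant $K$ so that any two vertices in the same component of $\Gamma\setminus B(1,n)$ are joined by a path avoiding $B(1,n-K)$, and only finitely many isomorphism types of such ``end pieces'' occur --- which is equivalent to $\Gamma$ being quasi-isometric to a tree. In particular $G$ is not one-ended, so by Stallings' theorem on ends $G$ is finite, virtually $\Z$, or splits nontrivially as an amalgam or HNN extension over a finite subgroup. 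The vertex groups of such a splitting are again finitely generated, and having context-free word problem passes to finitely generated subgroups (restrict the automaton / intersect with an appropriate regular language), hence they obey the same trichotomy; iterating gives a decomposition of $G$ as the fundamental group of a graph of groups with finite edge groups.

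The remaining --- and historically the subtle --- obstacle is to ensure this iteration terminates. A group with context-free word problem is finitely presented (a further pumping argument bounds the lengths of relators needed), so Dunwoody's accessibility theorem applies: after finitely many steps one obtains a \emph{finite} graph of groups in which every edge group is finite and every vertex group is finitely generated with at most one end. But a group with context-free word problem and at most one end must be finite, by the quasi-isometry-to-a-tree conclusion above. Hence $G$ is the fundamental group of a finite graph of finite groups, and such groups are exactly the finitely generated virtually free groups (Karrass--Pietrowski--Solitar). I expect the main difficulty to be the very first step of this direction --- passing from the formal-language hypothesis to the clean combinatorial $K$-triangulability statement about $\Gamma$ --- since everything afterward can be assembled from Stallings' theorem and accessibility, which may be invoked as black boxes.
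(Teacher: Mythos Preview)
The paper does not prove this theorem at all: it is quoted in the introduction as a classical result of Muller and Schupp, with the parenthetical remark that it relies on Dunwoody's accessibility theory, and a reference to \cite{dunwoody1, muller-schupp-1, muller-schupp-2}. There is no argument in the paper to compare your proposal against.

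That said, your outline is the standard route and is consistent with the paper's one-line attribution: the ``if'' direction via a pushdown automaton for a free group together with closure under finite-index overgroups, and the ``only if'' direction via the context-free graph/triangulability condition, Stallings' ends theorem, finite presentability, and Dunwoody accessibility to terminate the iterated splitting. Since the paper treats Muller--Schupp purely as background and offers no proof, there is nothing further to compare.
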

A group $G$ with $WP(G)$ a context free language is called a \emph{context-free group} or a $\mathcal{CF}$ group.
 
One thus sees that $\mathcal{CF}$ groups provide a generalisation of finite groups from the point of view of
computer science, as the context-free languages are one of the simplest generalisations of regular languages (the difference arises as the machines that are used to create context-free languages are directed labelled graphs which have a stack for memory, and can make transitions based on this changing stack, while the machines for regular languages are simply finite directed graphs with no form of memory other than their structure, see, e.g., \cite{Hopcroft+Ullman/79/Introduction} for an introduction to automata and formal language theory).  

A further generalisation is given by the class \cocf of \emph{co-context-free groups}
which are defined to be finitely generated groups $G$ such that $coWP(G)$ is context-free.  This is a generalisation as the virtually free groups actually have \emph{deterministic} context-free word problems, and thus their language of co-words is also a deterministic context-free language.

Muller and Schupp's results are in some sense the last complete classification of a class of groups corresponding to a class of languages.  Thus, focus has shifted to the \cocf groups.

In \cite{holtrerotho} Holt, Rees, R\"over and Thomas  
introduce the \cocf groups.  They show that the class
\cocf of all co-context-free groups is closed under taking:
\begin{itemize}
\item taking finite direct products, 
\item taking restricted standard wreath products with context-free top groups, 
\item passing to finitely generated subgroups 
\item passing to finite index overgroups.
\end{itemize}
In \cite{holtrerotho} there are also various conjectures about how other operations interact with the class \cocf.  Also, there is a discussion about whether certain very specific groups can be \cocf groups.  Currently, it is conjectured that $\Z^2*\Z$ and the Grigorchuk group $\Gamma$ are not in \cocf, and that certain wreath products cannot be in \cocf.

Now, let 
$\mathcal{T}_{2,c}$ be the infinite binary $2$-edge-colored binary tree (left edges $=$ red, right edges $=$ blue), and let $\quatT$ be the group of all bijections on the vertices of $\mathcal{T}_{2,c}$
which respect the edge and color relationships, except for at possibly finitely many locations.
Lehnert in \cite{LehnertDissertation} shows the following results, amongst others.

\begin{theorem}[Lehnert] The group $\quatT$ is a \cocf group, and 
there is an embedding from R. Thompsons group $V$ into $\quatT$.
\end{theorem}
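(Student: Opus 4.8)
We treat the embedding $V\hookrightarrow\quatT$ and the assertion that $\quatT$ is \cocf separately. For the embedding, the plan is to promote the standard action of $V$ on the boundary $\partial\bT=\{0,1\}^{\mathbb N}$ to an action on the vertices of $\cT$. Write $g\in V$ by a reduced tree pair $(D,R,\sigma)$: here $D,R$ are finite rooted binary subtrees with a common number $n$ of leaves and $\sigma$ is a bijection from the leaves of $D$ to those of $R$, with $g$ acting on $\partial\bT$ by the prefix substitution $\ell w\mapsto\sigma(\ell)w$, where $\ell$ is the unique leaf of $D$ prefixing the infinite word. Applied instead to finite words, the same formula is a colour- and adjacency-preserving bijection from the set of vertices having a leaf of $D$ as a prefix onto the set of vertices having a leaf of $R$ as a prefix --- it rewrites only a bounded initial segment and leaves untouched the remainder of each word, together with all of the parent/child and left/right data it carries. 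The vertices omitted on the two sides are exactly the interior nodes of $D$ and of $R$, which are finite in number ($n-1$ each), so any bijection between these two finite sets extends $g$ to an element $\widehat g\in\quatT$. Given such a construction, injectivity of $g\mapsto\widehat g$ is automatic: from $\widehat g$ one recovers the action of $g$ on all but finitely many cones, hence the boundary homeomorphism $g$, and $V$ acts faithfully on $\partial\bT$.

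The substantive point in this direction is to choose the bijections on interior nodes so that $\iota\colon g\mapsto\widehat g$ is a homomorphism. The prefix-substitution part is already multiplicative --- tree pairs compose to tree pairs, so $\widehat{gh}$ and $\widehat g\,\widehat h$ agree on the cofinitely many deep vertices --- and the whole question reduces to a coherent bookkeeping of interior nodes across all of $V$ simultaneously. I expect that making this work, by using the rigidity of $\cT$ (its fixed colouring and distinguished root) to force the interior-node assignment from the boundary data rather than leaving a genuine choice, is the main obstacle in this half. A crude choice will in general only produce a map into $\quatT$ modulo its finitary part, which is precisely the reason the correct target is $\quatT$ rather than $\mathrm{Aut}(\cT)$.

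For the assertion that $\quatT$ is \cocf there are two routes. The quick one: $\quatT$ is finitely generated --- one exhibits explicit generators, namely Thompson-type vertex rearrangements together with a transposition of two nearby vertices, and checks these suffice --- and then, since $V$ is \cocf (Lehnert--Schweitzer, quoted above), since \cocf is closed under passing to finitely generated subgroups (Holt--Rees--R\"over--Thomas, quoted above), and since $\quatT$ embeds in $V$ (the main theorem of the present paper), $\quatT$ is \cocf. The direct route, needed for a self-contained argument, is to build a nondeterministic pushdown automaton recognising $coWP(\quatT)$: on input $w=g_1\cdots g_m$ in the generators, the automaton guesses a vertex $v$ that $w$ moves, tracks the image of $v$ under the product on its stack --- legitimate because each generator, and (with the stack absorbing the growth of the defect subtrees) each partial product, alters only a bounded prefix of a vertex-word up to a finite lookup table for shallow vertices --- and accepts when the tracked image differs from $v$. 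Closure of context-free languages under reversal lets the generators be processed in a workable order, and arranging the stack so that the terminal comparison with $v$ can actually be performed is the delicate part of this route. Once both halves are in place the theorem follows.
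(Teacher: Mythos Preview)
Your setup for the embedding half is correct as far as it goes, but you have not given a proof: you explicitly flag the coherence of the interior-node bijection as ``the main obstacle in this half'' and then leave it unresolved. That is the entire content of the argument. A generic choice of bijection on the interior nodes does not produce a homomorphism (as you yourself note, one only gets a map to $\quatT$ modulo its finitary subgroup), and ``rigidity of $\cT$'' by itself does not single out a canonical choice --- you need an actual construction.

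The paper's embedding (Section~3) solves this with a specific device. Rather than letting $V$ act on all of $\cT$, it shifts the action into the subtree rooted at the vertex $0$; the root $\varepsilon$ of $\cT$ then becomes an extra available slot. A binary tree with $n$ leaves has exactly $n-1$ interior nodes, so the shifted copy $T'$ (rooted at $0$) together with $\varepsilon$ furnishes exactly $n$ slots --- one per leaf. The paper defines $\omega_T$ to be the bijection sending the leaves of $T'$, in left-to-right order, to these $n$ slots, also in left-to-right order (with $\varepsilon$ rightmost). For $\alpha\sim(D,R,\sigma)$ the interior-node assignment is then $\omega_D^{-1}\sigma\,\omega_R$. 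Because this left-to-right association is compatible with elementary caret expansions of the tree pair, the resulting map $\Theta$ is well defined on $V$ and is a homomorphism. This shift-and-order trick is precisely the missing idea in your sketch.

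On the \cocf half: the paper does not prove this itself; it is attributed to Lehnert's thesis. Your ``quick route'' (deduce that $\quatT$ is \cocf from the paper's main theorem $\quatT\hookrightarrow V$, the Lehnert--Schweitzer result that $V$ is \cocf, and closure of \cocf under finitely generated subgroups) is a valid a posteriori argument once the rest of the paper is in hand, though it is of course not Lehnert's original proof. Your direct pushdown-automaton route is in the right spirit but is again only a sketch; the stack bookkeeping you acknowledge as delicate is not carried out.
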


We note that Lehnert and Schweitzer \cite{lehnschweitz1} prove that the Higman-Thompson groups $G_{r,s}$ are in \cocf, which also shows that $V= G_{2,1}$ is in \cocf.

In his dissertation, Lehnert also makes the following conjecture:

\begin{conjecture}[Lehnert]
The group $\quatT$ is a universal \cocf group.
\end{conjecture}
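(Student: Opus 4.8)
The plan is to first replace the conjecture with the equivalent statement obtained in this paper by combining the embedding $\quatT \hookrightarrow V$ proved here with Lehnert's embedding $V \hookrightarrow \quatT$, namely that \emph{Thompson's group $V$ is a universal \cocf group}: every finitely generated group $G$ whose co-word problem $coWP(G)$ is context-free embeds into $V$. Fixing the standard faithful action of $V$ on Cantor space $\CS$ (infinite binary strings) by prefix replacements, the strategy is then to prove that the property ``embeds into $V$'' is inherited through each of the four closure operations that define the class \cocf, starting from an adequate stock of base cases.

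For the base cases, finite groups embed into $V$ (permuting the pieces of a clopen partition of $\CS$) and finitely generated free groups embed into $V$ (classical ping-pong with suitable prefix maps); more generally one needs, and expects, that every finitely generated virtually free --- equivalently, context-free --- group embeds into $V$, realising the Bass--Serre action on the boundary of the underlying locally finite tree by prefix-type homeomorphisms. Closure of $V$-embeddability under finite direct products is then immediate: given $G_1, G_2 \hookrightarrow V$, split $\CS$ into two disjoint clopen copies of itself and let $G_i$ act on the $i$-th. Closure under passage to finite-index overgroups requires more care, since a finite extension of a subgroup of $V$ need not visibly lie in $V$; here one would try to promote an embedding $G \hookrightarrow V$ to one of $H \geq G$ by letting $H$ act on $[H:G]$ clopen copies of $\CS$ through a compatibly chosen transversal.

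The technical heart is closure under restricted wreath products with context-free top group: if $G \hookrightarrow V$ and $Q$ is context-free (hence itself $\hookrightarrow V$), one wants $G \wr Q = \bigl(\bigoplus_{Q} G\bigr) \rtimes Q \hookrightarrow V$. The idea is to realise $Q$ as prefix-rewriting homeomorphisms permuting a $Q$-indexed clopen partition $\{C_q\}_{q\in Q}$ of $\CS$, install a copy of the given $G$-action inside each $C_q$, and check that every resulting composite is \emph{eventually} a prefix-replacement map, hence lies in $V$. Since $Q$ is infinite the partition $\{C_q\}$ cannot be uniform, so the delicate point is to choose it --- and to choose the copies of the $G$-action on the $C_q$ --- compatibly with the geometry of the Bass--Serre tree of $Q$, so that composing an element of $Q$ (which shuffles the pieces) with an element of $\bigoplus_{Q} G$ (the identity on all but finitely many $C_q$) still yields a homeomorphism with only finitely many breakpoints relative to a single finite subdivision. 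Arranging this uniformly is where the genuine combinatorial work lies.

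The real obstacle, however, is that \emph{no structure theory for \cocf groups is presently available}: the four Holt--Rees--R\"over--Thomas operations are not known to generate the whole class \cocf from the virtually free groups, so even a complete verification of all the steps above would only establish universality of $V$, and hence of $\quatT$, \emph{relative to the subclass of \cocf groups they do generate}. Closing this gap is the crux, and presumably the reason Lehnert stated only a conjecture. Two routes seem possible: prove such a structure theorem for \cocf groups directly, or bypass it by a single uniform construction that, from a pushdown automaton recognising $coWP(G)$, manufactures a faithful action of $G$ on $\CS$ by eventually-prefix-replacement maps --- turning the automaton's stack operations into navigation of the tree. This last step is the one I expect to be hardest, and it is where a proof of the conjecture will ultimately have to be won.
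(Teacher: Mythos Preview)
The statement you are addressing is a \emph{conjecture}, and the paper does not prove it; it remains open. The paper's only contribution concerning this statement is exactly the reduction you take as your first step: by constructing an embedding $\quatT \hookrightarrow V$ and combining it with Lehnert's $V \hookrightarrow \quatT$, the authors restate the conjecture in the equivalent form ``$V$ is a universal \cocf group.'' That is the extent of what the paper establishes here, so there is no proof in the paper against which to compare your proposal.

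Your proposal is not a proof either, and to your credit you say so explicitly. The strategy of checking that the class of subgroups of $V$ is closed under the four Holt--Rees--R\"over--Thomas operations, even if carried out in full, would show only that the subclass of \cocf generated by those operations from virtually free groups embeds in $V$; since no structure theorem asserts that this subclass exhausts \cocf, the argument cannot conclude. The alternative route you sketch at the end --- manufacturing a faithful $V$-action directly from a pushdown automaton for $coWP(G)$ --- is precisely what a proof of the conjecture would have to supply, and you do not supply it. So the gap is not a missing lemma or a wrong step; it is the entire conjecture.

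One further caution on the partial program: closure of $V$-embeddability under restricted wreath products $G \wr Q$ with $Q$ context-free is not established in the generality you assume. The results in \cite{bleak-salazar1} give only conditional embeddings of wreath and free products, and your sketch of a $Q$-indexed clopen partition compatible with the Bass--Serre geometry of $Q$ is a plausible heuristic, not an argument. So even the ``relative'' version of the conjecture that your outline targets is not yet within reach.
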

Thus, Lehnert conjectures that a group $G$ is in \cocf if and only if it is finitely generated and it embeds in $\quatT$.

It is the main focus of this paper to discuss Lehnert's conjecture.
 
\subsection{Our results, and the ongoing discussion}
Lehnert and Schweitzer in \cite{LSPersonalCommunication} asked the authors of \cite{bleak-salazar1}  whether they thought one could embed $\quatT$ into R. Thompson's group $V$, as those authors had just shown that $\Z^2*\Z$ fails to embed into $V$ (supporting the conjecture of Holt, R\"over, Rees and Thomas).  This question eventually lead the current authors to our main result, below.
\begin{theorem}
There is an embedding $\quatT \rightarrowtail V$.
\end{theorem}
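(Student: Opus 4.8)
The plan is to realize $\quatT$ concretely as a group of homeomorphisms of Cantor space and then recognize those homeomorphisms as elements of $V$. First I would fix a good model: identify the vertices of $\cT$ with the set $\words$ of finite binary strings, so that a vertex $w$ has left child $w0$ (red edge) and right child $w1$ (blue edge). The boundary $\partial\cT$ is then naturally Cantor space $\CS=\{0,1\}^{\mathbb{N}}$, with the usual cylinder topology, and $V$ is the group of right-continuous (prefix-replacement) homeomorphisms of $\CS$ determined by finite bijective tables between prefix codes (maximal antichains in $\words$). The key observation to exploit is that an element of $\quatT$ is a bijection of the \emph{vertex set} that is an honest color-preserving tree automorphism outside a finite subtree; such a map is determined by what it does on that finite ``defect'' region together with a collection of rooted-tree isomorphisms between the finitely many complementary subtrees it permutes.

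Next I would turn a quasi-automorphism $\varphi$ into a homeomorphism of $\CS$. Since $\varphi$ agrees with a genuine color-preserving automorphism away from a finite set of vertices, there is a finite antichain $A\subseteq\words$ (the set of roots of the ``undisturbed'' subtrees) such that $\varphi$ maps the subtree below each $a\in A$ isometrically (as a rooted binary tree, hence as a prefix-replacement map on the corresponding cylinder) onto the subtree below some $a'=\sigma(a)$, where $\sigma$ is a bijection of $A$; and $\varphi$ acts on the finitely many vertices not below $A$ in some prescribed finite way. The induced boundary map $\hat\varphi\colon\CS\to\CS$ is then exactly the prefix-replacement homeomorphism with table $\{(a,\sigma(a)) : a\in A\}$ — i.e. $\hat\varphi(a\xi)=\sigma(a)\xi$ — which is by definition an element of $V$. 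One checks this assignment $\varphi\mapsto\hat\varphi$ is a homomorphism: composing two quasi-automorphisms, one refines the two antichains to a common one on which both act by prefix replacement, and the boundary maps compose as the corresponding tables compose, which is precisely multiplication in $V$.

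The main obstacle — and the place I expect to spend real effort — is \textbf{injectivity}, equivalently showing the boundary action faithfully remembers the action on the interior finite ``defect.'' A single quasi-automorphism is \emph{not} determined by its boundary map: permuting finitely many interior vertices of $\cT$ among vertices at the same depth with isomorphic hanging subtrees changes $\varphi$ but can leave $\hat\varphi$ untouched. So the naive map $\varphi\mapsto\hat\varphi$ has a (locally finite) kernel, and the honest fix is to encode the interior data into the boundary. The trick I would use: replace $\CS=\{0,1\}^{\mathbb{N}}$ by a Cantor space with an extra ``decoration'' symbol at each vertex — work instead with the homeomorphism type $\CS\times K$ for a suitable finite set / small Cantor factor, i.e. pass to the infinite binary tree with an added child at every vertex recording ``this is the vertex itself, not a descendant.'' Concretely, subdivide: attach to each vertex $w$ a private leaf cylinder $C_w$ (via the standard homeomorphism $\{0,1\}^{\mathbb N}\cong\{0,1\}^{\mathbb N}$ splitting off a point-cylinder from each node), so that a quasi-automorphism now acts on a Cantor space whose cylinders are indexed by \emph{all} vertices of $\cT$, interior ones included. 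On this enlarged space the action of $\varphi$ becomes a genuine prefix-replacement map whose finite table records both the permutation of complementary subtrees \emph{and} the permutation of the finitely many disturbed interior vertices, so no information is lost — giving injectivity. The remaining checks (that the enlarged space is still standard Cantor space so its prefix-replacement homeomorphism group is $V$, that color constraints are automatically handled since $V$ ignores colors, and that the homomorphism property survives the decoration) are routine but need to be written carefully; I would also double-check that finitely-supported interior permutations land in $V$ rather than some proper subgroup, which is immediate once they are expressed as finite table maps.
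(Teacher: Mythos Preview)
Your approach is correct and is essentially the same as the paper's: the ``decoration'' trick you describe---attaching a private leaf cylinder $C_w$ to every vertex $w$ so that the enlarged Cantor space has cylinders indexed by \emph{all} vertices of $\cT$---is exactly the paper's construction of replacing each node $w$ by a caret with children $w_n$ (carrying the original subtree, hence the boundary $V$-part) and a new terminal leaf $w_p$ (recording the interior bijection). The paper's ``disjoint decomposition'' (Lemma~\ref{thm:disjoint-decomposition}) formalises your first step of finding the antichain $A$ below which $\varphi$ acts by prefix replacement, and the well-definedness and homomorphism checks you flag as ``routine'' are carried out there via comparison of tree-pair representatives under caret expansion.
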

That is, one can now re-state Lehnert's conjecture as:

\begin{conjecture}[Lehnert]
R. Thompson's group $V$ is a universal \cocf group.
\end{conjecture}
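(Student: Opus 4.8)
To prove that there is an embedding $\quatT\rightarrowtail V$, the plan is to realise every quasi-automorphism of $\cT$ as a prefix-replacement homeomorphism of a slightly enlarged Cantor set, and then to recode that Cantor set binarily. First I would make the internal structure of $\quatT$ explicit. Every $\phi\in\quatT$ is determined by a finite rooted subtree $T_0$ of $\cT$ --- the region in which $\phi$ may violate the tree-with-colours structure --- outside of which $\phi$ restricts to a colour-preserving forest isomorphism onto the complement of another finite rooted subtree $T_1$. Because the edge-colouring rigidifies the tree, such an isomorphism can only be a prefix replacement: identifying the vertex set with $\words$ and writing $v_1,\dots,v_k$ for the vertices lying immediately below the leaves of $T_0$, the map $\phi$ sends each \emph{cone} $v_i\words$ to a cone $v_i'\words$ by $v_iu\mapsto v_i'u$, while on the remaining finite vertex set $V(T_0)$ it is an \emph{arbitrary} bijection $\alpha$ onto $V(T_1)$ (so $\abs{V(T_0)}=\abs{V(T_1)}$). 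Thus $\quatT$ is ``$V$-like on the cones, plus an unconstrained finite permutation in the middle'', and the one thing blocking a direct view of $\quatT$ inside $V$ is exactly that middle permutation, since $V$ only ever shuffles clopen cylinders, never individual vertices.

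The key idea is to inflate each vertex into its own clopen set. Let $\hat{\mathcal{T}}$ be the tree obtained from $\cT$ by grafting a pendant copy of the binary tree at every vertex; concretely $\hat{\mathcal{T}}$ is the tree of finite words over $\{0,1,2\}$ in which the letter $2$ occurs at most once. Its boundary $X:=\partial\hat{\mathcal{T}}$ is a Cantor set; for $w\in\words$ let $\hat{C}_w\subseteq X$ be the clopen set of boundary points whose ray passes through the vertex $w$, and let $P_w:=w2\{0,1\}^\omega\cong\{0,1\}^\omega$ be the \emph{pendant cylinder} at $w$. Then $\hat{C}_w=\hat{C}_{w0}\sqcup\hat{C}_{w1}\sqcup P_w$, and the subtree of $\hat{\mathcal{T}}$ below any $w\in\words$ is canonically isomorphic to $\hat{\mathcal{T}}$ itself. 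For $\phi$ as above I would define $\Phi_\phi\colon X\to X$ by performing on each cone $\hat{C}_{v_i}$ the same prefix replacement $v_i\mapsto v_i'$ that $\phi$ does --- now applied to the whole clopen set, not only to boundary ends --- and sending each pendant $P_w$ with $w\in V(T_0)$ rigidly onto $P_{\alpha(w)}$ by the identity on the pendant coordinate. One then checks that $\Phi_\phi$ is a prefix-replacement homeomorphism of $X$, with defining prefix codes $\{\,v_i\,\}\cup\{\,w2:w\in V(T_0)\,\}$ and $\{\,v_i'\,\}\cup\{\,w'2:w'\in V(T_1)\,\}$; that $\Phi_\phi$ does not depend on the chosen decomposition of $\phi$ (enlarging $T_0$ replaces cones by subcones coherently with the pendants); that $\phi\mapsto\Phi_\phi$ is a homomorphism (the verification being the familiar one of passing to a common refinement of two prefix codes, exactly as when multiplying in $V$); and that it is injective, because $\Phi_\phi(P_w)=P_{\phi(w)}$ for all $w$, so $\Phi_\phi=\mathrm{id}$ forces $\phi=\mathrm{id}$.

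It remains to land inside $V=V_{2,1}$ rather than inside the Higman--Thompson group $G(\hat{\mathcal{T}})$ of prefix-replacement homeomorphisms of $X$. I would recode $\hat{\mathcal{T}}$ onto the binary tree by a valence-sensitive edge code --- at a $3$-valent vertex send the three child-edges to the binary words $0$, $10$, $11$, and at a $2$-valent vertex use the identity --- obtaining a homeomorphism $\rho\colon X\to\{0,1\}^\omega$ that is prefix-preserving and, because a pendant has been grafted at \emph{every} vertex, self-similar enough that conjugation by $\rho$ carries each $\Phi_\phi$ to a prefix-replacement homeomorphism of $\{0,1\}^\omega$, that is, to an element of $V$. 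Then $\phi\mapsto\rho\,\Phi_\phi\,\rho^{-1}$ is the desired embedding $\quatT\rightarrowtail V$. (Alternatively, one can quote the known fact that the Higman--Thompson group of any tree of bounded valence in which every vertex has at least two children embeds into $V$.)

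The genuinely creative point is the single observation that the pointwise middle permutation $\alpha$ becomes a legitimate Thompson-type move once each vertex has been swollen into its own pendant cylinder; everything past that is bookkeeping. The parts most likely to need care --- and where I expect the main obstacle to lie --- are: checking that $\Phi_\phi$ is well defined independently of the decomposition of $\phi$ and that $\phi\mapsto\Phi_\phi$ is multiplicative, which requires tracking how pendants sit inside cones under refinement; and checking that conjugation by $\rho$ really keeps us inside $V$ and not merely inside some $V_{n,r}$, which is precisely why $\hat{\mathcal{T}}$ must carry a pendant at \emph{every} vertex, so that the recoding is genuinely self-similar and sends prefix replacements to prefix replacements.
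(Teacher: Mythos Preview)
Your proof is correct and follows essentially the same idea as the paper: both inflate each vertex of $\cT$ into its own clopen cylinder (your pendant $P_w$ is precisely the paper's $p$-leaf $w_p$) so that the finite ``middle'' bijection of a quasi-automorphism becomes a genuine cylinder-shuffle in $V$. The paper carries this out by a direct caret-insertion rule on finite tree-pairs rather than by first building an auxiliary ternary tree $\hat{\mathcal{T}}$ and then recoding binarily, but the two constructions are equivalent, and the checks you single out---independence of the chosen decomposition and multiplicativity---are exactly what the paper verifies in its well-definedness lemma and embedding theorem.
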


Thus, if Lehnert's conjecture is true, a group will be in \cocf if and only if it is finitely generated and it embeds as a subgroup of $V$.

Working to understand the class \cocf better, we will now discuss some of what is known about the subgroups of R. Thompson's group $V$. 

 It is known that $V$ contains many embedded copies of non-abelian free groups, and indeed, many free products of its subgroups.  In the paper \cite{bleak-salazar1}, the authors give some more specific results.  They find conditions under which particular restricted wreath products and free products of subgroups of $V$ actually can embed into $V$. Also, they show that $\mathbb{Z} \ast \mathbb{Z}^2$
does not embed into $V$, supporting the conjecture of Holt, R\"over, Rees, and Thomas.  In \cite{corwinthesis} Nathan Corwin adds to these results by showing that $\mathbb{Z} \wr \mathbb{Z}^2$ does not embed
into $V$.

Three important questions that are often asked, pertaining to subgroup structure of a given group $G$, are a) to decide if non-abelian free groups embed into $G$, b) to decide whether surface groups embed into $G$, and c) to decide if the Baumslag-Solitar groups embed into $G$.  

In \cite{roverthesis} R\"over shows that if $n$ is a proper divisor of $m$, then the Baumslag-Solitar group $BS(m,n)$
does not embed into $V$.  R\"over's proof is based on using the fact observed by Higman (the corollary to Lemma 9.3 (see \cite{hig})) that non-torsion elements in the Higman-Thompson groups fail to have infinitely many roots.
Using a different method, we extend R\"over's result as follows.

In this paper we show that certain Baumslag-Solitar groups are co$\mathcal{CF}$ groups.  In particular,  we decide exactly which Baumslag-Solitar groups embed in R. Thompson's group $V$ (and indeed, into the Higman-Thompson groups $G_{s,r}$).

\begin{theorem}\label{BS-groups}
Let $m,n \in \Z \setminus \{ 0 \}$. Let $BS(m,n)$ be the corresponding
Baumslag-Solitar group.
\begin{enumerate}
\item If $|m| \ne |n|$, then $BS(m,n)$ fails to embed in $V$.
\item If $|m| = |n|$, then there is an embedding of $BS(m,n)$ in $V$.
\end{enumerate}
\end{theorem}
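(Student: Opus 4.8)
We prove the two parts separately.

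\emph{Part (1): non-embedding for $|m|\neq|n|$.} Suppose, for a contradiction, that $\phi\colon BS(m,n)\rightarrowtail V$ is an embedding, and set $\alpha=\phi(a)$, $\tau=\phi(t)$. Since $m,n\neq 0$, the group $BS(m,n)$ is an HNN extension of $\langle a\rangle\cong\Z$ with associated subgroups $\langle a^{m}\rangle$ and $\langle a^{n}\rangle$, so $a$ has infinite order and hence so does $\alpha$; applying $\phi$ to the relation $ta^{m}t^{-1}=a^{n}$ gives $\tau\alpha^{m}\tau^{-1}=\alpha^{n}$, i.e.\ $\alpha^{m}$ and $\alpha^{n}$ are conjugate in $V$. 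The plan is to attach to each infinite-order $g\in V$ a positive real number $\lambda(g)$, its \emph{maximal expansion rate}, and to show it gives the desired obstruction. Using the theory of revealing tree-pair representatives (cf.\ \cite{bleak-salazar1} and references therein), an infinite-order element $g$ of $V$ possesses at least one attracting and at least one repelling eventually periodic point in $\CS$; near such a point $p$, of period $\ell$, the homeomorphism $g^{\ell}$ lengthens or shortens the prefix shared with $p$ by a fixed amount $d_{p}\in\Z_{>0}$, and we set the local rate at $p$ to be $d_{p}/\ell$ and $\lambda(g)$ to be the maximum of these finitely many local rates. I would then verify: (i) $\lambda$ is a conjugacy invariant, since a conjugator sends attracting/repelling eventually periodic points to attracting/repelling eventually periodic points while preserving periods and the quantities $d_{p}$; and (ii) $\lambda(g^{k})=|k|\,\lambda(g)$ for every nonzero $k$, since $g^{k}$ has exactly the same attracting/repelling eventually periodic points as $g$ (with the two types possibly interchanged), with each local rate multiplied by $|k|$. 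Granting (i) and (ii), from $\alpha^{m}\sim_{V}\alpha^{n}$ we get $|m|\,\lambda(\alpha)=|n|\,\lambda(\alpha)$, and since $\lambda(\alpha)>0$ this forces $|m|=|n|$, a contradiction. I expect the main obstacle to be precisely the careful construction of $\lambda$ and the proof of (i) and (ii) inside the revealing-pair formalism; this is the cost of using a method different from R\"over's, which instead invoked the corollary to Higman's Lemma~9.3 of \cite{hig} on roots of elements of infinite order.

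\emph{Part (2): embedding for $|m|=|n|$.} Since $BS(m,m)\cong BS(-m,-m)$ we may take $m\geq 1$, and $BS(1,1)=\Z^{2}$ embeds in $V$, so assume $m\geq 2$. The key algebraic fact is the isomorphism $BS(m,m)\cong F_{m}\rtimes\Z$, where $F_{m}=F(x_{0},\dots,x_{m-1})$ is free of rank $m$ and the generator $y$ of $\Z$ acts by the cyclic shift $x_{i}\mapsto x_{i+1\bmod m}$, the isomorphism sending $t\mapsto x_{0}$ and $a\mapsto y$. I would establish this by a Reidemeister--Schreier computation for the subgroup $K=\ker\!\big(BS(m,m)\to\Z,\ a\mapsto 1,\ t\mapsto 0\big)$ with Schreier transversal $\{a^{i}:i\in\Z\}$: the induced generators are $t_{i}=a^{i}ta^{-i}$, the rewrites of the single defining relator $[a^{m},t]$ are exactly $t_{i}=t_{i+m}$, so $K$ is free on $t_{0},\dots,t_{m-1}$, conjugation by $a$ realises the cyclic shift, and $1\to K\to BS(m,m)\to\Z\to 1$ splits via $a$.

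\emph{Part (2), continued: routing into $V$.} Next I would embed $F_{m}\rtimes\Z$ into $\big(F_{m}\rtimes(\Z/m)\big)\times\Z$ via $x_{i}\mapsto(x_{i},0)$ and $y\mapsto(\bar{y},1)$, where $\bar{y}$ is the order-$m$ cyclic shift generating $\Z/m$: this is a homomorphism because $\bar{y}^{m}=1$ makes the relation $y\,x_{m-1}\,y^{-1}=x_{0}$ hold in the target, and it is injective because $w\,y^{k}\mapsto\big((w,\bar{y}^{k}),k\big)$, which is trivial only if $k=0$ and $w=1$. One checks directly that $F_{m}\rtimes(\Z/m)\cong\Z*(\Z/m)$ (matching $x_{0}$ with the $\Z$-generator and $\bar{y}$ with the $\Z/m$-generator), and $\Z*(\Z/m)$ embeds in $V$ by the known results on free products of cyclic subgroups of $V$ --- or, concretely, by a ping-pong argument on $\CS$ using an infinite-order ``bubble'' element for the $\Z$-factor and a finite-order prefix permutation for the $\Z/m$-factor, in the spirit of \cite{bleak-salazar1}. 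Since $\Z\hookrightarrow V$ and $V\times V\hookrightarrow V$, we obtain the chain
\[
BS(m,m)\;\cong\;F_{m}\rtimes\Z\;\hookrightarrow\;\big(\Z*(\Z/m)\big)\times\Z\;\hookrightarrow\;V\times V\;\hookrightarrow\;V,
\]
which also shows that every $BS(m,m)$ is a \cocf group, being a finitely generated subgroup of the \cocf group $V$. Here the only delicate link is the embedding $\Z*(\Z/m)\hookrightarrow V$, which I would either cite or supply by the explicit ping-pong just indicated.
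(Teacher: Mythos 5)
Your part (1) is essentially the paper's own argument: the ``maximal expansion rate'' $\lambda$ you sketch is exactly the conjugacy invariant the paper extracts from the set of logarithms of slopes of $\alpha$ at its attracting/repelling periodic (important) points, and your claims (i) and (ii) are true and are what the paper verifies (it sidesteps your period--normalisation by first passing to a power of $v$ acting with no nontrivial finite orbits, then comparing the sets $\mathcal{S}_r = r\cdot\mathcal{S}_1$ and $\mathcal{S}_s = s\cdot\mathcal{S}_1$ and taking maxima of absolute values). So, modulo actually writing out (i) and (ii) in the revealing-pair formalism, part (1) is sound and not a different route.

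The genuine gap is in part (2). The hypothesis $|m|=|n|$ means $n=m$ or $n=-m$, but your reduction ``$BS(m,m)\cong BS(-m,-m)$, so take $m\ge 1$'' only disposes of simultaneous sign changes, and everything after it treats only $BS(m,m)$. The groups $BS(m,-m)$ are not covered and are not isomorphic to any $BS(k,k)$: for instance $BS(1,-1)$ is the Klein bottle group, not $\Z^2$, and for $m\ge 2$ the abelianisation of $BS(m,-m)$ is $\Z\oplus\Z/2m\Z$ whereas $F_m\rtimes\Z$ abelianises to $\Z\oplus\Z$. Moreover your key structural step breaks there: the assignment $a\mapsto 1$, $t\mapsto 0$ is not even a homomorphism to $\Z$ when $n=-m$, since the relator then has $a$-exponent sum $2m\ne 0$, so the Reidemeister--Schreier computation giving $F_m\rtimes\Z$ does not apply. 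The paper treats both signs uniformly: for $e=\pm 1$ it embeds $BS(m,em)$ diagonally into $\left(\Z\ast\Z/m\Z\right)\times BS(1,e)$, the first coordinate being the quotient by the normal subgroup $\langle a^m\rangle$ and the second the natural map $a\mapsto a'$, $b\mapsto b'$, checks the two kernels intersect trivially, and then uses $\Z\ast\Z/m\Z\le V$ (demonstrative subgroups), $\Z^2\le V$, the fact that the Klein bottle group $BS(1,-1)$ is a finite extension of $\Z^2$ together with closure of the class of subgroups of $V$ under finite-index overgroups, and finally $V\times V\le V$. Your $BS(m,m)$ chain is a correct variant of this for $e=1$ (with second factor $\Z$ in place of $\Z^2$), but to prove the theorem as stated you must supply the $e=-1$ case, e.g.\ by the same diagonal argument with $BS(1,-1)$ in the second factor.
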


In fact, the theorem above holds for all of the Higman-Thompson groups $G_{r,s}$ (not just $V=G_{2,1}$) using essentially the same proof we give.

At the time of this writing, Burillo, Cleary, R\"over, and Stein are working on a survey of obstructions to finding embeddings into R. Thompson's group $V$, which will feature various arguments including an obstruction based on distortion of subgroups.
One can use the distortion obstruction to give the above non-embedding results for the Baumslag-Solitar groups, and in fact the argument is equivalent to our presented argument.  We discuss this briefly in Section \ref{BSEmbeddings}.  

It is a question of Gromov as to whether the surface groups embed into all (word-)hyperbolic groups \cite{bestvina-questions}, and there has been much work on this question by the broader community.  Amongst many results and partial results, one can highlight the general results of the Calegari school on stable commutator length, and in particular the work in \cite{calegari-walker-2, calegari-walker-1, walkerthesis} which shows amongst other things that in HNN extensions of free groups, amalgamating the base free group to an endomorphic embedded copy of itself, one can often find surface subgroups.  The present authors have attempted to use these results to find surface groups (other than the torus and the Klein bottle groups)
in $V$, but so far we have been unsuccessful.  Thus, we ask the following question.
\begin{question}
Do the hyperbolic (closed) surface groups embed into R. Thompson's group $V$?
\end{question}
We note in passing that it is known that the set of subgroups in $V$ is closed in passing to finite index over-groups (see e.g. \cite{bleak-salazar1,roverthesis}), thus all that is required is to find a copy of any single hyperbolic (closed) surface subgroup  in $V$ to see that they all embed.

\subsection*{Acknowledgements} Thanks go to Lehnert and Schweitzer for asking
us about the relation between $V$ and $\quatT$. We wish to thank
Fr\'ed\'eric Haglund for suggesting us to investigate whether or not surface groups embed in $V$, and to thank Mark Sapir for asking us to investigate whether the Baumslag-Solitar groups embed into $V$.
We thank Jos\'e Burillo, Yves de Cornulier and Claas R\"over for helpful and interesting conversations.
The second author gratefully acknowledges the Fondation Math\'ematique 
Jacques Hadamard (FMJH - ANR - Investissement d'Avenir) for 
the support received during the development of this work.

\section{Decomposition of elements of $\mathrm{QAut}(\mathcal{T}_{2,c})$}
\label{sec:decomposition-QAut}

In this section we define two ways to present an element of $\quatT$.  The first form associates a minimal element of $V$ with an element of $\quatT$, but is complicated by the intervention of two further bijections between finite subsets of $\{0,1\}^*$, while the second way is simpler, and associates any element of $\quatT$ to a non-unique element of $V$ and a bijection between two finite subsets of $\{0,1\}^*$.  We believe our first form is new while the second appears to be what is used by Lehnert in his dissertation \cite{LehnertDissertation}.  Still, the first form enables us to build an embedding of $\quatT$ into $V$ in
Section \ref{sec:QAut-inside-V}.

To set up the notation we will be using, we define by $\{0,1\}^\ast$ as the set of all finite words in the alphabet $\{0,1\}$ and by $\{0,1\}^{\omega}$
the set of infinite words in the alphabet $\{0,1\}$.
The set $\{0,1\}^{\omega}$ corresponds to the boundary of the tree $\mathcal{T}_2$ and to the standard ternary Cantor set $\mathfrak{C}$.
If a word $a \in \{0,1\}^\ast$ is a \emph{prefix} of a word $w$, we write $a<w$.
If $a_1\neq a_2\in \words$ we write $a_1 \perp a_2$ if neither is a prefix of the other.

\subsection{Building an element in $V$ from an element in $\quatT$}

Let $\tau \in \mathrm{QAut}(\mathcal{T}_{2,c})$ be seen as a map $\tau:\{0,1\}^\ast 
\twoheadrightarrow \{0,1\}^\ast$. 
For any $w \in \{0,1\}^\ast$ we examine the pair $(w, w \tau)$ to find the largest common suffix $s_w$ such that
\begin{align*}
w = x_w s_w \\
w \tau = y_w s_w
\end{align*}
for suitable $x_w, y_w \in \{0,1\}^\ast$ prefixes. We define $\Gamma_\tau := \{(x_w,y_w) \mid w \in \{0,1\}^\ast \}$.

\claim{$\Gamma_\tau$ is finite.}

\begin{proof}
Since the map $\tau$ is a quasi-automorphism, there exists a level $k$ in the domain tree below which the adjacency and color relations are respected by the action of $\tau$. 
Assume that $w_1$ is any node below level $k$ and let $w_2:=w_1 a$, for some $a \in \{0,1\}$. Since $w_1$ and $w_2$
are adjacent and below level $k$, we have $w_2 \tau =\left(w_1 a \right) \tau =
\left( w_1 \tau \right) a$. 
This immediately extends to any word $\lambda \in \words$, so that
$\left(w_1 \lambda \right) \tau = \left( w_1 \tau \right) \lambda$. Since this argument holds for any $w_1$ below level $k$, this shows that there
can be only finitely many elements in $\Gamma_\tau$ since every word below level $k$ is a descendant of the finite set of words at level $k$.
\end{proof}

Since the set $\Gamma_\tau$ is finite we can find a subset $M_\tau \subseteq \Gamma_\tau$ which is essential in the following sense: for every pair
$(a,b)\in M_\tau$ 
there exist infinitely many words $w$ such that 
$(x_w,y_w)=(a,b)$. 

Recall that an \emph{anti-chain} is a subset of a partially ordered set such that any two elements in the subset are incomparable
and that an anti-chain is \emph{complete} if it is maximal with respect to inclusion.
By the definition of $M_\tau$ the sets of words
\[\mathcal{L}_{D_\tau}:=\left\{a\in\words\mid \exists b\in \words, (a,b)\in M_\tau\right\}
\]
and
\[\mathcal{L}_{R_\tau}:=\left\{b\in\words\mid \exists a\in \words, (a,b)\in M_\tau\right\}
\]
both form finite complete anti-chains for the poset $\words$ ordered by prefix inclusion. The set 
$\mathcal{L}_{D_\tau}$ has the properties that if $a_1\neq a_2\in 
\mathcal{L}_{D_\tau} \subset\words$ then  
$a_1\perp a_2$ and for all sufficiently long words $w$ in $\words$, there is $a_w\in 
\mathcal{L}_{D_\tau}$ so that $a_w<w$, and similarly for $\mathcal{L}_{R_\tau}$.
Therefore, $M_\tau$ naturally determines a prefix code map on $\{0,1\}^{\omega}$ which is determined by two finite complete anti-chains of equal cardinality.  Note that this is another way of defining element of R. Thompson's group $V$.

Furthermore, one easily sees that the construction of $M_\tau$ as above produces the bijection between the leaves of the unique minimal tree-pair representative for the particular element $v_\tau$ of $V$ which $M_\tau$ determines.  As any finite rooted subtree of $\mathcal{T}_2$ is determined by its leaves and vice-versa, 
we can identify the finite complete anti-chains $\mathcal{L}_{D_\tau}, \mathcal{L}_{R_\tau}$ with 
finite trees $D_\tau, R_\tau$ and say that  $(D_\tau,R_\tau,\sigma_\tau)$ is the minimal tree pair representative for the element $v_\tau$ that $M_\tau$ determines.  We write $v_\tau\sim (D_\tau,R_\tau,\sigma_\tau)$ in this case to emphasise that $(D_\tau,R_\tau,\sigma_\tau)$ is the minimal tree pair representing $v_\tau$. (Note: we also write $v_\tau\sim(D,R,\sigma)$ if $(D,R,\sigma)$ is any tree pair representing $v_\tau$.)

\begin{remark}
There may exist a word $w \in \{0,1\}^\ast$ such that there is a pair $(a,b) \in M_\tau$ with $a$ prefix of $w$ but 
$w \to w \tau$ is not determined by the pair $(a,b)$.  This will lead us to consider a finite permutation also associated with $\tau$, which we will discuss below in Subsection \ref{ssec:permutation-part-of-tau}.
\end{remark}

\subsection{The finite bijection between internal nodes of the tree pair in $v_\tau$.
\label{ssec:bijection-part-of-tau}}


By a slight abuse of notation we identify the tree $D_\tau$ with the set of words corresponding to the nodes of $\tau$ strictly above the leaves. We make a similar identification for $R_\tau$.

We now define a (non-canonical) bijection $b_\tau: D_\tau \to R_\tau$. For every word $w \in D_\tau \cap (R_\tau)\tau^{-1}$, we define $w b_\tau:= w \tau$.
Then we complete $b_\tau$ to a bijection by choosing and fixing a bijection between the sets $D_\tau \setminus (R_\tau) \tau^{-1}$ and $R_\tau \setminus (D_\tau)\tau$.

\subsection{Finite permutation on a subset of the nodes of the tree $R_\tau$\label{ssec:permutation-part-of-tau}}
We observe that $v_\tau$ can be seen as a tree pair diagram or a map acting on $\words \setminus D_\tau$
as a prefix replacement map.
Consider the following map 
\[
\widetilde{v}_\tau
=
\begin{cases}
b_\tau & \mbox{over $D_\tau$} \\
v_\tau & \mbox{over $\{0,1\}^\ast \setminus D_\tau$.}
\end{cases}
\]
The map $\widetilde{v}_\tau$ is an element of $\mathrm{QAut}(\mathcal{T}_{2,c})$ and, by construction, it differs with $\tau$ on only
finitely many vertices (which are possibly spread between $D_\tau$ and $\{0,1\}^\ast \setminus D_\tau$). Thus the map
$p_\tau := \widetilde{v}_\tau^{-1} \tau$ is a permutation on finitely many vertices of $\cT$.

\subsection{Minimal decomposition of elements in $\mathrm{QAut}(\mathcal{T}_{2,c})$}

We are now able to write down the decomposition for $\tau \in \mathrm{QAut}(\mathcal{T}_{2,c})$ that we were looking for. The following result is an immediate consequence
of the discussion above.

\begin{lemma}[Minimal decomposition]
\label{thm:minimal-decomposition}
For every $\tau \in \mathrm{QAut}(\mathcal{T}_{2,c})$, there exists a permutation $p_\tau$ on finitely many vertices of $\mathcal{T}_{2,c}$ so that
\begin{equation}\label{eq:minimal-decomposition}
\tau = \widetilde{v}_\tau p_\tau
\end{equation}
where $\widetilde{v}_\tau \in \mathrm{QAut}(\mathcal{T}_{2,c})$ acts as an element of Thompson's group $V$ beneath a suitable level and is a bijection on
the finitely many nodes above such level.
\end{lemma}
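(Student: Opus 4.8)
The plan is to assemble the statement directly from the objects already built in Subsections~\ref{ssec:bijection-part-of-tau} and~\ref{ssec:permutation-part-of-tau}, so the written proof should be short. Given $\tau \in \quatT$, we have the element $v_\tau \in V$ with minimal tree-pair representative $(D_\tau, R_\tau, \sigma_\tau)$, the bijection $b_\tau \colon D_\tau \to R_\tau$ on internal nodes, and the map $\widetilde v_\tau$ equal to $b_\tau$ on $D_\tau$ and to the prefix-replacement map $v_\tau$ on $\{0,1\}^\ast \setminus D_\tau$. First I would record that $\widetilde v_\tau$ is a well-defined bijection of $\{0,1\}^\ast$ lying in $\quatT$ and that it has exactly the shape required by the conclusion: off the finite set $D_\tau$ it coincides with an element of $V$, so it respects adjacency and colour beneath the level determined by $R_\tau$, while on $D_\tau$ it is a finite bijection of the nodes above that level.

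The one step with real content is to verify that $\widetilde v_\tau$ and $\tau$ agree on all but finitely many vertices of $\cT$. For this I would use the level $k$ produced in the proof that $\Gamma_\tau$ is finite together with the completeness of the anti-chain $\mathcal{L}_{D_\tau}$: for a word $w$ that is long enough to have a (necessarily unique) prefix $a_w \in \mathcal{L}_{D_\tau}$ and that also lies beneath level $k$, the essential pair controlling $w$ is $(a_w, \sigma_\tau(a_w)) \in M_\tau$, so $w\tau$ is obtained from $w$ by the prefix replacement $a_w \mapsto \sigma_\tau(a_w)$ --- which is precisely the action of $v_\tau$, hence of $\widetilde v_\tau$, on $w$. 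The words on which this can fail are therefore confined to the finitely many words in $D_\tau$ together with the finitely many ``short'' words that the anti-chain $\mathcal{L}_{D_\tau}$ does not yet govern (this is the subtlety flagged in the Remark preceding Subsection~\ref{ssec:bijection-part-of-tau}). I expect this finiteness estimate to be the only genuine obstacle, precisely because it is the point where one must use the quasi-automorphism level and the anti-chain structure in tandem; the rest is definition-chasing.

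Finally, setting $p_\tau := \widetilde v_\tau^{\,-1} \tau$, the previous paragraph shows that $p_\tau$ fixes every vertex outside a finite set, hence is a permutation on finitely many vertices of $\cT$. Left-multiplying by $\widetilde v_\tau$ gives $\tau = \widetilde v_\tau p_\tau$, which is~\eqref{eq:minimal-decomposition}, and $\widetilde v_\tau$ has the asserted form by the first paragraph. I would keep the written proof to a couple of lines, citing the constructions above rather than reproducing them.
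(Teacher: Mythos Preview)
Your proposal is correct and matches the paper's approach exactly: the paper treats the lemma as an immediate consequence of the constructions in Subsections~\ref{ssec:bijection-part-of-tau} and~\ref{ssec:permutation-part-of-tau}, where $\widetilde v_\tau$ and $p_\tau := \widetilde v_\tau^{-1}\tau$ have already been defined and the fact that they differ from $\tau$ on only finitely many vertices has been asserted. Your additional detail on why the disagreement set is finite is welcome, though note it follows most cleanly from the observation that every pair in $\Gamma_\tau \setminus M_\tau$ is non-essential and hence arises from only finitely many words, rather than directly from the level $k$.
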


We say that the decomposition of Lemma \ref{thm:minimal-decomposition} is a \emph{minimal decomposition} because the tree pair for the associated element of $V$ is minimal.

\begin{remark}
As observed above, this decomposition is not unique and depends on how we choose to build the map $b_\tau$. Nevertheless, there is always a way to create
the decomposition in (\ref{eq:minimal-decomposition}).
\end{remark}

\subsection{Disjoint decomposition form for elements in $\mathrm{QAut}(\mathcal{T}_{2,c})$}
It is possible to rewrite the form of Lemma \ref{thm:minimal-decomposition} so that $v_\tau$ is represented by a tree pair $(D_d,R_d,\sigma_d)$ whose domain tree $D_d$ is the full subtree of $\mathcal{T}_2$ at depth $k$, for some $k$, while $b_\tau:X\to Y$ is a bijection from the set $X$ of vertices of  $\mathcal{T}_2$ of depth less than $k$, to the set $Y$ of vertices of $\mathcal{T}_2$ which are above the leaves of $R_d$,  and with $p_\tau$ equal to the identity map.

\begin{lemma}[Disjoint decomposition]
\label{thm:disjoint-decomposition}
For every $\tau \in \mathrm{QAut}(\mathcal{T}_{2,c})$, there exists 
a map $d_\tau \in \mathrm{QAut}(\mathcal{T}_{2,c})$ which 
acts as an element of Thompson's group $V$ beneath a suitable level and is a bijection on
the finitely many nodes above such level and
such that
\begin{equation}\label{eq:disjoint-decomposition}
\tau = d_\tau.
\end{equation}
\end{lemma}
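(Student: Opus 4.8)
The plan is to take $d_\tau$ to be $\tau$ itself and to verify the asserted structure by combining the minimal decomposition of Lemma~\ref{thm:minimal-decomposition} with a sufficiently deep expansion of the tree pair $(D_\tau,R_\tau,\sigma_\tau)$. Write $\tau=\widetilde v_\tau p_\tau$ as in~(\ref{eq:minimal-decomposition}), let $v_\tau\sim(D_\tau,R_\tau,\sigma_\tau)$ be the associated minimal tree pair, let $b_\tau\colon D_\tau\to R_\tau$ be the chosen bijection on internal nodes, let $k_0$ be a level below which $\tau$ respects the adjacency and colour relations (so every leaf of $D_\tau$ and of $R_\tau$ has length at most $k_0$, as in the Claim of Section~\ref{sec:decomposition-QAut}), and let $m$ be the maximal length of a vertex in the (finite) support of $p_\tau$. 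For each leaf $\ell_i$ of $D_\tau$ write $d_i=|\ell_i|$ and $e_i=|\sigma_\tau(\ell_i)|$.

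First I would fix a level $k$ large enough to satisfy the finitely many lower bounds $k\ge k_0$, $k>\max_i d_i$, $k>\max_i e_i$, and $k>m+\max\bigl\{0,\ \max_i(d_i-e_i)\bigr\}$. Then I would expand $(D_\tau,R_\tau,\sigma_\tau)$ to a tree pair $(D_d,R_d,\sigma_d)$ by grafting, below each leaf $\ell_i$ of $D_\tau$ and below the corresponding leaf $\sigma_\tau(\ell_i)$ of $R_\tau$, a copy of the full binary tree of depth $k-d_i$ (and extending $\sigma_\tau$ by the identity on these new leaves). This is an expansion by carets, so $(D_d,R_d,\sigma_d)$ again represents $v_\tau$, and $D_d$ is by construction the full subtree of $\bT$ of depth $k$. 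Set $X:=\{w\in\{0,1\}^\ast : |w|<k\}$, which is exactly the set of internal nodes of $D_d$, and let $Y$ be the set of internal nodes of $R_d$, i.e.\ the vertices of $\bT$ strictly above the leaves of $R_d$.

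The heart of the argument is three observations. \emph{(a)} $\widetilde v_\tau$ restricted to $X$ is a bijection onto $Y$: on the internal nodes of $D_\tau$ (all of length $<k_0\le k$) it equals $b_\tau$, whose image is the set of internal nodes of $R_\tau$; on the remaining vertices of $X$, namely the leaves $\ell_i$ of $D_\tau$ together with their proper descendants of length $<k$, it equals $v_\tau$, which sends $\ell_i\mu\mapsto\sigma_\tau(\ell_i)\mu$ and so maps this set bijectively onto the old leaves $\sigma_\tau(\ell_i)$ together with the newly grafted internal nodes of $R_d$; these two images are disjoint, and their union is precisely $Y$. \emph{(b)} By the choice of $k$, every leaf of $R_d$ has length $>m$, hence every vertex of length $\le m$ is a proper prefix of some leaf of $R_d$; thus the support of $p_\tau$ is contained in $Y$, so $p_\tau$ restricts to a permutation of $Y$ and fixes every word of length $>m$, in particular every word lying at or below a leaf of $R_d$. \emph{(c)} If $|w|\ge k$ then $w\notin D_\tau$, so $w\widetilde v_\tau=wv_\tau$; writing $w=\lambda\mu$ with $\lambda$ the (length-$k$) leaf of $D_d$ above $w$, we get $wv_\tau=\sigma_d(\lambda)\mu$, a word of length $\ge e_i+k-d_i>m$, which is therefore fixed by $p_\tau$; hence $w\tau=(w\widetilde v_\tau)p_\tau=wv_\tau$. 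Combining these: on $X$ the map $\tau=\widetilde v_\tau p_\tau$ is a bijection $X\to Y$ composed with a permutation of $Y$, hence a bijection $X\to Y$, while on $\{0,1\}^\ast$ of length $\ge k$ the map $\tau$ agrees with the prefix replacement determined by $(D_d,R_d,\sigma_d)$, an element of $V$. Since $X$ and $Y$ are exactly the complements of the (co-finite) domain and range of that prefix replacement, $d_\tau:=\tau$ has the asserted disjoint form, with $p_\tau$ absorbed into the bijection on $X$.

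The main obstacle is purely bookkeeping: one must choose $k$ large enough that \emph{all} of the exceptional vertices moved by $p_\tau$ lie strictly above the leaves of the expanded range tree $R_d$, so that they are swept into the finite bijection rather than colliding with the tree-pair part of the map — this is what forces the estimate $k>m+\max_i(d_i-e_i)$, since a grafted branch on the range side may be shallower than its mate on the domain side. Verifying observation~\emph{(a)} — that the expansion dovetails exactly with the splitting of $\{0,1\}^\ast$ into $X$ and its complement on the domain side, and into $Y$ and the leaf-descendants of $R_d$ on the range side — is the step that needs the most care; everything else is a direct unwinding of the definitions of Section~\ref{sec:decomposition-QAut}.
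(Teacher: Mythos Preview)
Your argument is correct and follows essentially the same route as the paper's own proof: both start from the minimal decomposition $\tau=\widetilde v_\tau p_\tau$, unreduce the tree pair for $v_\tau$ so that the domain tree becomes a full subtree deep enough that all vertices in the support of $p_\tau$ sit strictly above the leaves of the new range tree, and then absorb $p_\tau$ into the finite bijection on internal nodes. The paper simply asserts that one may ``choose $D_\tau'$ a full subtree so that the leaves of each of the trees $D_\tau'$ and $R_\tau'$ are strictly below the set of vertices non-trivially acted upon by both $b_\tau$ and $p_\tau$''; your explicit inequality $k>m+\max_i(d_i-e_i)$ is exactly the quantitative content of that clause, and your observations (a)--(c) spell out what the paper leaves implicit.
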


\begin{proof}
We use the same notation of the previous subsections and apply Lemma \ref{thm:minimal-decomposition} 
to the element $\tau$ to rewrite it as $\widetilde{v}_\tau p_\tau$. We then unreduce the tree pair diagram $(D_\tau,R_\tau,\sigma)$ of 
$v_\tau$ to a new pair $(D_\tau',R_\tau',\sigma')$ where 
$D_\tau'$ is a full subtree chosen so that
the leaves of each of the trees $D_\tau'$ and $R_\tau'$ are strictly below 
the set of vertices non-trivially acted upon by both 
$b_\tau$ and $p_\tau$.
We now define a map $b_\tau': D_\tau' \to R_\tau'$ by $(t)b_\tau':=(t)\widetilde{v}_\tau$
and the map $p_\tau': R_\tau' \to R_\tau'$ by $(t)p_\tau':=(t)p_\tau$.
By construction and Lemma \ref{thm:minimal-decomposition}, it is obvious that $(t)\tau=(t)b_\tau' p_\tau'$ for $t \in D_\tau'$.

We thus define $s_\tau:D_\tau' \to R_\tau'$ by $(t)s_\tau:=b_\tau' p_\tau'$ and finally we define
\[
d_\tau
=
\begin{cases}
s_\tau & \mbox{over $D_\tau'$} \\
\widetilde{v}_\tau & \mbox{over $\{0,1\}^\ast \setminus D_\tau'$.}
\end{cases}
\]
It is immediate from our construction that $\tau = d_\tau$ and that $d_\tau$ is built as an element of $V$ below some level and a bijection
above such level.
\end{proof}

We say that the decomposition of Lemma \ref{thm:disjoint-decomposition} is a \emph{disjoint decomposition} because 
$d_\tau$ is described via a bijection $s_\tau$ and an element of Thompson's group $V$, that is $\widetilde{v}_\tau$ restricted
to the lowest vertices in $\words \setminus D_\tau'$ (which clearly define a tree).
We sometimes refer to $s_\tau$ as the \textbf{bijection part of $d_\tau$} and to element of the Thompson's group $V$
given by restricting $\widetilde{v}_\tau$ to the lowest vertices of $\words \setminus D_\tau'$
as the \textbf{$V$-part of $d_\tau$}.

\begin{remark}
Given a map $\tau \in \quatT$ we can canonically define \textbf{cutoff level of $\tau$}
as the smallest level $k$ of the domain tree such that the map $\tau$ behaves as an automorphism on every vertex on every level $\ge k$. If $I(\tau)$ is the set of vertices in the domain of $\tau$
such that $\tau$ does not respect either the adjacency relation or the color relation,
and $\ell $ is the largest level of any point inside $I(\tau)$, then $k=\ell +1$.

We observe that the definition of the cutoff level depends only on $\tau$, but one can also recover it via the support of $p_\tau$.
If we denote by $\supp(p_\tau)$ the support of $p_\tau$ and consider the set
\[
Z(p_\tau):=\left(D_\tau \setminus (R_\tau) \tau^{-1} \right) \cup \left(R_\tau \setminus (D_\tau)\tau \right)\tau^{-1} \cup 
\supp(p_\tau) \tau^{-1}
\]
and $r$ is the largest level of any point in $Z(p_\tau)$, then $k=r+1$.
If $k$ is the cutoff level of $\tau$, we call the disjoint decomposition built via the full subtree of level $k$
as the \textbf{cutoff disjoint decomposition of $\tau$}. We observe that this decomposition is unique
since the cutoff level is uniquely defined.
\end{remark}

\begin{lemma}\label{thm:cutoff-disjoint}
Any disjoint decomposition can be obtained by refining the cutoff disjoint decomposition..
\end{lemma}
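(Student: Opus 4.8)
The plan is to identify precisely what data a disjoint decomposition of $\tau$ records, to see that the only genuine freedom is the depth of the full domain subtree, and then to recognise ``increasing that depth'' as the unreduction/refinement procedure. First I would observe that a disjoint decomposition of $\tau$ is completely determined by its domain tree: in the notation of Lemma~\ref{thm:disjoint-decomposition} the bijection part satisfies $(t)s_\tau=(t)\widetilde v_\tau p_\tau=(t)\tau$ for every internal vertex $t$ of the full domain subtree $D_\tau'$, so $s_\tau$ is simply $\tau$ restricted to the internal vertices of $D_\tau'$, and the $V$-part is forced to be the prefix-replacement map induced by $\tau$ on the forest hanging below the leaves of $D_\tau'$. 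Neither depends on the auxiliary choice of $b_\tau$, so a disjoint decomposition is nothing more than a choice of full domain subtree $D'$ together with the two induced restrictions of $\tau$.

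Next I would show that the depth of $D'$ is at least the cutoff level $k$. Since $\tau=d_\tau$ agrees with the $V$-part on everything below the leaves of $D'$, and a $V$-part is a prefix-replacement map (so a leaf $\ell$ goes to some $\ell'$ and $\ell w\mapsto \ell' w$, which respects the adjacency and colour relations everywhere below those leaves), $\tau$ respects adjacency and colour at every level $\ge \mathrm{depth}(D')$. By the description of the cutoff level in the remark preceding this lemma, the largest level carrying a vertex of $I(\tau)$ (equivalently a point of $Z(p_\tau)$) is $k-1$; hence $\mathrm{depth}(D')\ge k$. Conversely, for each $k'\ge k$ the construction in the proof of Lemma~\ref{thm:disjoint-decomposition} does produce a disjoint decomposition whose domain is the full subtree $T_{k'}$ of $\cT$ of depth $k'$. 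So disjoint decompositions of $\tau$ are in bijection with the integers $k'\ge k$, the cutoff disjoint decomposition being the case $k'=k$.

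Finally I would make the refinement explicit and match it up. Write the cutoff disjoint decomposition $d_\tau^{(0)}$ with $V$-part carried by a tree pair $(T_k,R^{(0)},\sigma^{(0)})$ and bijection part $s^{(0)}$ equal to $\tau$ on $\mathrm{int}(T_k)$, and fix $k'\ge k$. The refinement of $d_\tau^{(0)}$ to depth $k'$ is: (i) unreduce $(T_k,R^{(0)},\sigma^{(0)})$ by grafting a full binary subtree of depth $k'-k$ below each leaf of $T_k$ and of $R^{(0)}$, giving an equivalent tree pair $(T_{k'},\widehat R,\widehat\sigma)$ for the same element of $V$; and (ii) extend the bijection part to $\mathrm{int}(T_{k'})$ by sending each new vertex, all of which lie in levels $k,\dots,k'-1$, the way the old $V$-part sends it. I would then check this coincides with the level-$k'$ disjoint decomposition: the element of $V$ is unchanged by unreduction; $\widehat R$ has leaf set $\{(m)\tau\cdot w \mid m \text{ a leaf of } T_k,\ |w|=k'-k\}=\{(\ell)\tau \mid \ell \text{ a leaf of } T_{k'}\}$, because below level $k$ one has $(uw)\tau=(u)\tau\cdot w$ for $u$ at level $k$; and the extended bijection equals $\tau$ on levels $k,\dots,k'-1$ because there the old $V$-part agrees with $\tau$ (it equals $d_\tau^{(0)}$ off $T_k$), while it is unaltered, hence still $\tau$, on levels $<k$. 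Taking $k'=k$ returns $d_\tau^{(0)}$ itself, so every disjoint decomposition of $\tau$ is a refinement of the cutoff one.

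The step I expect to be the main obstacle is the last one: the careful bookkeeping of which vertices, level by level, belong to the bijection part and which to the $V$-part in the two decompositions, and the verification that the cutoff $V$-part really does agree with $\tau$ across the overlap band of levels $k,\dots,k'-1$. This is exactly where the ``disjointness'' of Lemma~\ref{thm:disjoint-decomposition} --- the bijection part and the $V$-part acting on complementary vertex sets, with the $V$-part an honest prefix replacement --- gets used; the ingredient needed for the depth bound, namely the characterisation of $k$ via the bad vertices of $\tau$ (equivalently via $Z(p_\tau)$), is already supplied by the preceding remark.
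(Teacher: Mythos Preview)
Your proposal is correct and follows the same line as the paper's own argument, only in far more detail: the paper's proof is a one-liner observing that any full domain subtree $D_\tau'$ used in a disjoint decomposition must have depth $\ge k$ by the definition of the cutoff level, and then invokes the proof of Lemma~\ref{thm:disjoint-decomposition}. Your three-step plan (uniqueness of the disjoint decomposition given the depth, the lower bound $\mathrm{depth}(D')\ge k$, and the explicit refinement matching) unpacks exactly what the paper leaves implicit in the word ``immediate''; in particular your observation that the bijection part $s_\tau$ is literally $\tau$ on the interior vertices, and hence independent of the auxiliary choice of $b_\tau$, is a useful clarification the paper does not spell out.
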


\begin{proof}
This is immediate from the proof of Lemma \ref{thm:disjoint-decomposition} and the definition of the cutoff level
as any full subtree $D_\tau'$ used to build a disjoint decomposition must have depth greater or equal than $k$, the depth of the cutoff
level.
\end{proof}

\newcommand{\quat}{QAut}($\mathcal{T}_{2,c}$)
\section{An embedding $V \hookrightarrow \mathrm{QAut}(\mathcal{T}_{2,c})$}
In this section we provide an embedding $\Theta:  V\hookrightarrow \mathrm{QAut}(\mathcal{T}_{2,c})$.  Our embedding is similar in spirit to the one described by Lehnert in his dissertation \cite{LehnertDissertation}.  We provide this embedding as we were not able to directly verify the embedding Lehnert describes.

Given an element of $V$, we will embed it in $\mathrm{QAut}(\mathcal{T}_{2,c})$, with support over the union of the set of words which begin with `$0$' together with the set which contains only the  empty word.  No word beginning with `$1$' will be moved by our embedding.
Intuitively, our map will be what one gets if one associates $V$ as acting on the ordered set $(0,1/2]$ by interval exchange maps which exchange intervals of the form $(a,b]$ where $a$ and $b$ are dyadic rationals in the set $(0,1/2]$ (where here, the root of $\mathcal{T}_{2,c}$ is corresponding to the value $1/2,$ and our embedded copy of $V$ is acting only on the left half of the interval $[0,1]$ (fixing $0$).  Thus, our embedding really will be in the spirit of Lehnert's embedding, we now formalise this discussion.

Rule for the injection:  Given any finite rooted binary tree $T$, overlay the tree on $\mathcal{T}_{2,c}$ so that the root of $T$ will be placed at the node $0$ of $\mathcal{T}_{2,c}$.  Refer to the embedded tree as $T'$.  Let $X'$ be the set of interior nodes of $T'$ as a subset of the nodes of $\mathcal{T}_{2,c}$, and set $X:=X'\cup\{\varepsilon\}$, that is, $X'$ together with the empty node.  Now associate a bijection $\omega_T$ from the leaves of $T'$ to the nodes in $X$.  We associate the leaves of $T'$ to the nodes of $X$ in left-to-right order (as seen in the tree where again, $0$ means ``left child" and $1$ means ``right child").  In particular, the rightmost leaf of $T'$ is associated with the node $\epsilon.$

Now, given an element $\alpha\in V$, let us describe the image $\alpha\Theta$ in $\mathrm{QAut}(\mathcal{T}_{2,c}).$  Suppose $\alpha\sim(D,R,\sigma)$.  The we can embed both $D$ and $R$ in $\mathcal{T}_{2,c}$ each as in the above paragraph to find embedded images $D'$ and $R'$, remembering the associations from the leaves of these trees to the finite words in $\{0,1\}^*$ which correspond to nodes of these trees (or the empty word) after they are embedded in $\mathcal{T}_{2,c}$.  Now the permutation $\sigma$ informs us how to move the maximal subtrees of the tree $\mathcal{T}_{2,c}$ rooted at the leaves of $D'$ so they become trees rooted at the leaves of $R'$.  If $n$ is a node of $\mathcal{T}_{2,c}$ associated to a leaf of $D'$ via $\omega_D^{-1}$ (so, $n$ is either an interior node of $D'$ or the root node of $\mathcal{T}_{2,c}$), then it should be mapped to the node $n\omega_D^{-1}\sigma\omega_R$ which is a node of $\mathcal{T}_{2,c}$ associated to a leaf of $R'$ by the map $\omega_R$.

The reader may now verify that the proposed construction is well defined, and results in an injective group homomorphism from $V$ into $\quatT$.  In checking well-defined-ness, the authors proved the following lemma.

\begin{lemma}\label{wd_VinQ}
Suppose that $\alpha\in V$ has $\alpha\sim(D_1,R_1,\sigma_1)$ and $\alpha\sim(D_2,R_2,\sigma_2)$, where $D_2$ is an elementary expansion of $D_1$ (that is, $D_2$ contains $D_1$, and has exactly one extra caret).  Then $R_2$ is an elementary expansion of $R_1$ as well, and the map $\Theta$ will send both representative tree pairs $(D_1,R_1,\sigma_1)$ and $\alpha\sim(D_2,R_2,\sigma_2)$ to the same element in \quat. \end{lemma}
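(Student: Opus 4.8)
The plan is to verify that the embedding $\Theta$ is well-defined under a single elementary expansion, after which the general case follows by induction on the number of carets relating two representatives (using that any two tree-pair representatives of $\alpha$ have a common expansion). First I would set up notation: let $(D_1,R_1,\sigma_1)$ be a representative of $\alpha$, and let $D_2$ be obtained from $D_1$ by adding one caret at a leaf $\ell$ of $D_1$. Since $(D_1,R_1,\sigma_1)$ represents $\alpha$, the leaf $\ell$ is carried by $\sigma_1$ to some leaf $m = \ell\sigma_1$ of $R_1$, and the maximal subtree of $\cT$ rooted at $\ell$ is sent to the maximal subtree rooted at $m$. Adding a caret at $\ell$ forces, for the pair to still represent $\alpha$, a caret to be added at $m$ in $R_1$; this gives $R_2$ with the induced permutation $\sigma_2$ agreeing with $\sigma_1$ away from $\{\ell 0,\ell 1\}$ and sending $\ell 0 \mapsto m0$, $\ell 1 \mapsto m1$. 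This is the first assertion of the lemma and is standard for $V$; I would state it and give the one-line justification.

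Next I would compute both images under $\Theta$ and compare them as bijections of the vertex set of $\cT$. Recall that $\alpha\Theta$ acts by: embed $D_i$ as $D_i'$ rooted at node $0$, embed $R_i$ as $R_i'$ rooted at node $0$, and send a node $n$ lying in the interior of $D_i'$ (or the root $\varepsilon$ of $\cT$) to $n\omega_{D_i}^{-1}\sigma_i\omega_{R_i}$, together with the induced prefix-replacement on the subtrees hanging below the leaves of $D_i'$. The key bookkeeping step is to check that passing from $(D_1,R_1,\sigma_1)$ to $(D_2,R_2,\sigma_2)$ changes $\omega_{D_1}$ to $\omega_{D_2}$ in a controlled way: $D_2'$ has exactly one more interior node than $D_1'$ (the image of $\ell$ under the embedding, call it $\ell'$), the left-to-right order of the leaves of $D_2'$ is obtained from that of $D_1'$ by replacing $\ell$'s position with the two new leaves in order, and correspondingly $X_{D_2} = X_{D_1} \cup \{\ell'\}$. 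The same happens on the range side with $m' \in R_2'$. I would then verify, node by node, that the two prescriptions for $\alpha\Theta$ agree: for the "old" interior nodes of $D_1'$ the associated target nodes are unchanged because $\omega_{D_2}$ restricted to the old leaves, composed with $\sigma_2$ and $\omega_{R_2}$, reproduces $\omega_{D_1}\sigma_1\omega_{R_1}$ on the overlapping indices; for the new node $\ell'$ one checks it is sent to $m'$ by both descriptions (in the $(D_2,R_2,\sigma_2)$ picture directly via $\omega_{D_2}^{-1}\sigma_2\omega_{R_2}$, and in the $(D_1,R_1,\sigma_1)$ picture because the subtree below $\ell'$ was being translated wholesale to the subtree below $m'$, which in particular carries the root $\ell'$ of that subtree to $m'$); and below the leaves of $D_2'$ the prefix-replacement maps agree because they are just the restriction of the prefix-replacement maps of the $(D_1,R_1,\sigma_1)$ picture to smaller subtrees. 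Hence the two images coincide.

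The main obstacle I expect is purely notational: keeping straight the interplay between the abstract leaf-ordering bijections $\omega_{D_i}, \omega_{R_i}$ (which land in the index set $X = X' \cup \{\varepsilon\}$), the embedded trees $D_i', R_i'$ inside $\cT$ (rooted at node $0$), and the fact that the rightmost leaf always corresponds to $\varepsilon$ rather than to an interior node. In particular one must be careful about the case where the expansion caret is added at the rightmost leaf of $D_1$, since then $\ell' = \varepsilon$ is not an interior node of $D_1'$ but the root of $\cT$; I would remark that the argument above is uniform in this case because the definition of $X$ deliberately includes $\varepsilon$, and the node $\varepsilon$ plays exactly the same role in the construction as an interior node. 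Once the single-caret case is done, I would close by noting that for arbitrary representatives $(D_1,R_1,\sigma_1)$ and $(D_2,R_2,\sigma_2)$ of $\alpha$ one passes to a common expansion $D_3 \supseteq D_1, D_2$, applies the single-caret result finitely many times along each side, and concludes $\Theta$ is well-defined; that $\Theta$ is a homomorphism and injective is then a straightforward check from the definition, which I would leave to the reader as the statement indicates.
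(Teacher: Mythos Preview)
The paper does not actually include a proof of this lemma: it is stated as something the authors checked while verifying well-definedness, and the homomorphism property is then confirmed via a \texttt{GAP} computation rather than argued on paper. Your plan is precisely the natural hands-on verification, and it is correct in outline.

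One small correction to your bookkeeping. Your concern about the rightmost-leaf case is mis-stated: $\ell'$ is by definition the image of the abstract leaf $\ell$ under the embedding of $D_1$ into $\cT$ rooted at the node $0$, so $\ell'$ always lies strictly below $0$ and can never equal $\varepsilon$. What is true (and perhaps what you were reaching for) is that $\omega_{D_1}(\ell)=\varepsilon$ when $\ell$ is rightmost, but this causes no trouble. In fact no special case is needed once you record the clean pattern that a single caret at $\ell$ induces: $\omega_{D_2}(k)=\omega_{D_1}(k)$ for every old leaf $k\neq\ell$, while $\omega_{D_2}(\ell 0)=\ell'$ and $\omega_{D_2}(\ell 1)=\omega_{D_1}(\ell)$, with the identical pattern on the range side at $m$. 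With this in hand your node-by-node comparison goes through uniformly; just make sure that in your case split you also treat the node $\omega_{D_1}(\ell)\in X_{D_1}$ explicitly (its $\omega_{D_2}$-preimage is the \emph{new} leaf $\ell 1$, not an old leaf), rather than folding it into the ``old interior nodes'' case.
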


The authors have modelled both $\quatT$ and $V$ in \texttt{GAP}, and using this verified that $\Theta$ extends to a group homomorphism which preserves the relations in $V$.


\section{An embedding $\mathrm{QAut}(\mathcal{T}_{2,c}) \hookrightarrow V$}
\label{sec:QAut-inside-V}

In this section we show the existence of a group homomorphism $\varphi: \quatT \to V$ and,
for $\tau \in \quatT$, we define it via the the disjoint decomposition $d_\tau$.

For every $\tau \in \mathrm{QAut}(\mathcal{T}_{2,c})$, we apply Lemma \ref{thm:disjoint-decomposition} and obtain a 
disjoint decomposition of $\tau$ as an element $d_\tau \in \mathrm{QAut}(\mathcal{T}_{2,c})$.
We let $D_{d_\tau}$ and $R_{d_\tau}$ be the domain and range trees so that $d_\tau$ acts as an element of $V$ on the leaves of $D_{d_\tau}$
(and below them) and as a bijection above such leaves.

We now apply the following construction to the trees $D_{d_\tau}$ and $R_{d_\tau}$. We only explain it for the tree $D_{d_\tau}$, the other being analogous.
Assume that a vertex $w\in D_{d_\tau}$ has two children edges $e_{\mathrm{left}}$ and $e_{\mathrm{right}}$ and one parent edge 
$e_{\mathrm{parent}}$ in the tree.
We replace $w$ with a caret whose vertices are labeled by $(w, w_n, w_p)$, where $w_n$ is the left child of $w$ and $w_p$ is the right child of
$w$.
We attach the former parent edge $e_{\mathrm{parent}}$ on top of $w$ and we attach the former edges $e_{\mathrm{left}}$ and $e_{\mathrm{right}}$
below the vertex $w_n$. The vertex $w_p$ has no children. We apply this construction to every vertex 
$w \in D_{d_\tau}$ with two exceptions: 
\begin{enumerate}
\item the root vertex $\varepsilon$, for which there is no parent edge and so
we only attach the two edges below the left child $\varepsilon_n$ of $\varepsilon$.
\item Any leaf vertex $w$, to which we attach the two children edges with terminal vertices $w_n$ and $w_p$.
\end{enumerate}
The result of this construction is shown in figure \ref{fig:caret}.
We denote the two trees we have just constructed by
$\widehat{D}_{d_\tau}$ and $\widehat{R}_{d_\tau}$.
We say that all leaves of the form $w_n$ are the \textbf{$n$-leaves} and the leaves of the form $w_p$
are the \textbf{$p$-leaves}.

\begin{center}
\begin{figure}\label{fig:caret}
\includegraphics[height=2cm]{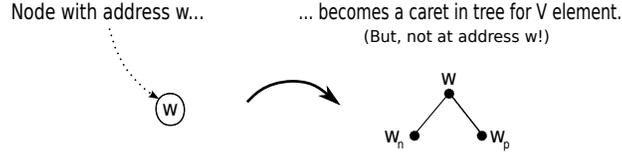}
\caption{The replacement rule for nodes}
\end{figure}
\end{center}

If $\LF(T)$ defines the set of leaves of a finite binary tree $T$, we observe that the map $d_\tau$ induces a a bijection $\sigma_{d_\tau}: \LF(\widehat{D}_{d_\tau}) \to \LF(\widehat{R}_{d_\tau})$ which we now
construct. By definition, 
the map $\sigma_{d_\tau}$ acts on a leaf $w_n \in \LF(\widehat{D}_{d_\tau})$ 
by sending it to the leaf $d_\tau(w)_n \in \LF(\widehat{R}_{d_\tau})$
and 
acts on a leaf $w_p \in \LF(\widehat{D}_{d_\tau})$ by sending it to a suitable leaf $t_p \in \LF(\widehat{R}_{d_\tau})$ if the map $d_\tau$ sends its parent vertex $w$ (seen as a vertex in $D_{d_\tau}$) to a vertex $t$ (seen as a vertex in $R_{d_\tau}$).
This defines a permutation $\sigma_{d_\tau}$ on the $n$-leaves (which comes from the permutation of the $V$-part of $d_\tau$) and on the $p$-leaves
(which comes from the bijection part of $d_\tau$).

We define $\varphi(\tau):=(\widehat{D}_{d_\tau}, \widehat{R}_{d_\tau}, \sigma_{d_\tau}) \in V$
(see figure \ref{fig:QAutInV_both}). 
We need to verify that such map is well defined as it relies on a choice of a disjoint
decomposition.

\begin{center}
\begin{figure}\label{fig:QAutInV_both}
\includegraphics[height=10cm]{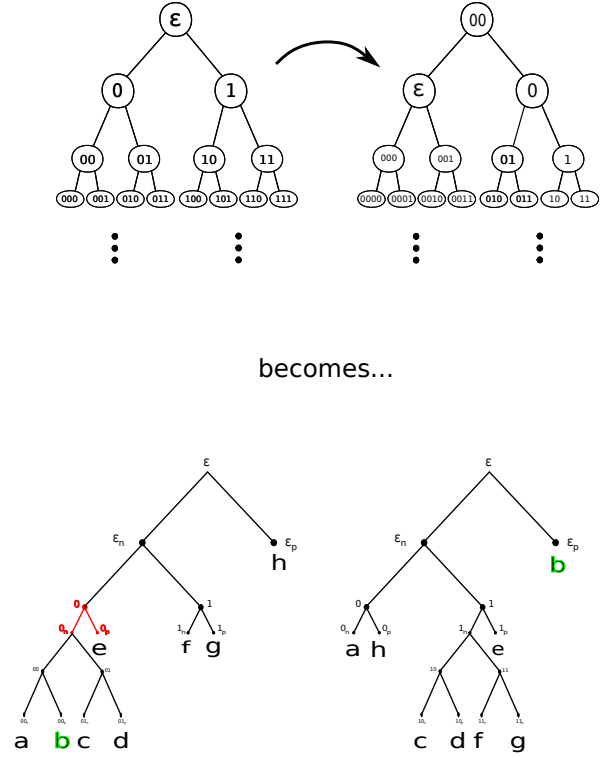}
\caption{The replacement rule for nodes}
\end{figure}
\end{center}

\begin{lemma}\label{thm:well-defined}
The map $\varphi: \quatT \to V$ defined above is well-defined.
\end{lemma}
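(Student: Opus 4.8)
The statement to prove is that $\varphi:\quatT\to V$ is well-defined, i.e.\ the element $(\widehat D_{d_\tau},\widehat R_{d_\tau},\sigma_{d_\tau})\in V$ does not depend on the choice of disjoint decomposition $d_\tau$ of $\tau$. By Lemma~\ref{thm:cutoff-disjoint}, every disjoint decomposition is obtained from the canonical cutoff disjoint decomposition by refining, i.e.\ by enlarging the full subtree $D_\tau'$ (and correspondingly $R_\tau'$) used to build it. So the plan is: (1) reduce to comparing two disjoint decompositions where one tree pair is a single elementary expansion of the other; (2) show that the hat-construction turns such an elementary expansion into an expansion of the resulting $V$-element; (3) iterate. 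In more detail, I would first fix $\tau$ and let $d_\tau$, $d_\tau'$ be two disjoint decompositions with full subtrees $D_\tau'\subseteq D_\tau''$. By Lemma~\ref{thm:cutoff-disjoint} both refine the cutoff decomposition, so it suffices (by transitivity of ``represents the same element of $V$'') to treat the case where $D_\tau''$ is obtained from $D_\tau'$ by adding one caret at a leaf $\ell$.

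\textbf{Key steps.} Step one: describe precisely what the added caret does on the $\quatT$ side. Adding a caret at the leaf $\ell$ of $D_\tau'$ forces the corresponding expansion at the leaf $\ell\sigma$ of $R_\tau'$ (since $d_\tau$ acts as an honest element of $V$, i.e.\ as a prefix-replacement map, below these leaves); the bijection part $s_\tau$ and the $V$-part are unchanged away from $\ell$, and at $\ell$ the $V$-part simply records that the subtree below $\ell$ maps to the subtree below $\ell\sigma$. Step two: translate through the hat-construction. A leaf $\ell$ of $D_\tau'$ becomes, in $\widehat D_{d_\tau'}$, a caret with an $n$-leaf $\ell_n$ and a $p$-leaf $\ell_p$; after the expansion, $\ell$ has children $\ell0,\ell1$ in $D_\tau''$, and in $\widehat D_{d_\tau''}$ each of $\ell0,\ell1$ becomes a caret with its own $n$- and $p$-leaves, all hanging (through the replacement rule) below $\ell_n$. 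One checks directly from the definition of the replacement rule that $\widehat D_{d_\tau''}$ is obtained from $\widehat D_{d_\tau'}$ by expanding precisely at the leaf $\ell_n$ (the $n$-leaf), and that the $p$-leaf $\ell_p$ is untouched; the same holds on the range side at $\ell\sigma$. Step three: check the permutation is compatible, i.e.\ $\sigma_{d_\tau''}$ is exactly the expansion of $\sigma_{d_\tau'}$ at the pair $(\ell_n,(\ell\sigma)_n)$. This follows because on $n$-leaves $\sigma_{d_\tau}$ is by definition induced by the $V$-part $w\mapsto d_\tau(w)$, and expanding the $V$-part's tree pair at $\ell$ expands its leaf bijection at $\ell\mapsto\ell\sigma$ in the usual way; the $p$-leaf assignments, coming from the bijection part $s_\tau$, are literally the same maps before and after. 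Hence $(\widehat D_{d_\tau''},\widehat R_{d_\tau''},\sigma_{d_\tau''})$ is an elementary expansion of $(\widehat D_{d_\tau'},\widehat R_{d_\tau'},\sigma_{d_\tau'})$ as tree pair diagrams, so they represent the same element of $V$. Iterating over the finitely many carets separating any two disjoint decompositions, and passing through the common cutoff decomposition, gives well-definedness.

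\textbf{Main obstacle.} The routine but genuinely fiddly part is Step two: verifying that the replacement rule (which turns each internal node $w$ into a caret $(w,w_n,w_p)$, reattaching the old children below $w_n$) commutes with elementary expansion of the underlying trees, in the precise sense that ``expand then hat'' equals ``hat then expand at the corresponding $n$-leaf.'' One must be careful about the two special cases in the construction — the root $\varepsilon$ (no parent edge) and former leaves (which get genuine children edges rather than having edges reattached) — since a leaf of $D_\tau'$ that becomes internal in $D_\tau''$ changes category, and one has to confirm the hat-picture still lines up as a single expansion at $\ell_n$. Once that combinatorial bookkeeping is done, Step three is essentially the standard fact that expanding a tree pair for an element of $V$ at a leaf yields an equivalent tree pair, applied to the $V$-part, together with the observation that the bijection-part data sits on the $p$-leaves and is transported verbatim. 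I expect no conceptual difficulty beyond carefully drawing the before/after pictures and matching them against Figure~\ref{fig:caret}.
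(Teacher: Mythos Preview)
Your proposal is correct and follows essentially the same route as the paper's proof: use Lemma~\ref{thm:cutoff-disjoint} to reduce to comparing two disjoint decompositions that differ by a basic expansion, and then verify that the hat-construction sends this to an expansion of tree pairs representing the same element of $V$. Two minor points of friction with the paper's setup: first, since a disjoint decomposition by definition has a \emph{full} domain subtree at some depth, the paper expands by one whole level (a caret at every leaf) rather than one caret at a time --- your intermediate single-caret steps are not literally disjoint decompositions, though the hat-construction and the reduction argument still make sense for them; second, what you call an ``elementary expansion'' at $\ell_n$ is in fact a three-caret expansion (producing the four new leaves $(\ell 0)_n,(\ell 0)_p,(\ell 1)_n,(\ell 1)_p$), but since the identical subtree is attached below $(\ell\sigma)_n$ on the range side with the order-preserving correspondence, it reduces just the same.
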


\begin{proof}
By Lemma \ref{thm:cutoff-disjoint}, any disjoint decomposition $d_\tau$ is built by starting from the cutoff disjoint 
decomposition and expanding the domain full subtree of the $V$-part coincides with $D_{d_\tau}$.
Any disjoint decomposition represents exactly the same map as $\tau$, but the way it is written out is slightly different, 
even if the final outcome is the same function. Any two disjoint decomposition can be obtained from the cutoff one using unreductions
of the $V$-part, thus one can go from one disjoint decomposition to another via a sequence of reductions and unreductions of the $V$-part.
Hence, it is sufficient to prove our claim in the case of
two disjoint decompositions $f_\tau$ and $g_\tau$ so that the tree pair defining the $V$-part of  $g_\tau$ is obtained by
an unreduction of a single level, that is adding a caret to each leaf in the domain and the range
tree of the $V$-part of $f_\tau$. 

Let $w$ be one of the leaves of $f_\tau$ to which we added two new children $w0$ and $w1$.
By construction, the domain (respectively, range) tree of 
$\varphi(g_\tau)$ is obtained by adding the construction of figure \ref{fig:caret}
to the leaf $w_n$ (respectively, adding the same construction to $f_\tau(w)_n=g_\tau(w)_n$). Moreover, the vertices $w_p, w0_p, w1_p$ are mapped
in an order preserving way 
to the vertices $f_\tau(w)_p=g_\tau(w)_p, g_\tau(w0)_p, g_\tau(w1)_p$.
Observe that, as we added the exact same construction of figure \ref{fig:caret} to the vertices $w_n$ and $f_\tau(w)_n$ while building the tree diagram of $\varphi(g_\tau)$, we can immediately
reduce it. We can do this for each and every of the carets we added to the $V$-part of $f_\tau$. Hence, it is immediate that $\varphi(g_\tau)=\varphi(f_\tau)$.
\end{proof}

\begin{theorem}
The map $\varphi: \quatT \to V$ is an injective homomorphism.
\end{theorem}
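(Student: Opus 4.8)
The plan is to prove injectivity and the homomorphism property separately, having already established well-definedness in Lemma~\ref{thm:well-defined}. For injectivity, I would observe that $\varphi(\tau)$ records, via the bijection $\sigma_{d_\tau}$ on $\LF(\widehat{D}_{d_\tau}) \to \LF(\widehat{R}_{d_\tau})$, exactly the two pieces of data determining $\tau$: the action of the $n$-leaves encodes the $V$-part of $d_\tau$ (hence the prefix-replacement behaviour of $\tau$ below the cutoff level), and the action of the $p$-leaves encodes the bijection part $s_\tau$ (hence the action of $\tau$ on the finitely many vertices above the cutoff level). Since by Lemma~\ref{thm:disjoint-decomposition} these two pieces together reconstitute $\tau$ completely, and since distinct $\tau$ give distinct data on at least one of the two leaf-types, the kernel of $\varphi$ is trivial. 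It is cleanest to argue this using the canonical \emph{cutoff} disjoint decomposition, so that the correspondence $\tau \mapsto (\widehat{D}_{d_\tau},\widehat{R}_{d_\tau},\sigma_{d_\tau})$ becomes genuinely one-to-one on representatives.

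For the homomorphism property, I would take $\tau_1, \tau_2 \in \quatT$ and compute $\varphi(\tau_1)\varphi(\tau_2)$ against $\varphi(\tau_1\tau_2)$. The key is to choose compatible disjoint decompositions: first pick a disjoint decomposition $d_{\tau_1}$ of $\tau_1$ with range tree $R_{d_{\tau_1}}$, then choose a disjoint decomposition $d_{\tau_2}$ of $\tau_2$ whose domain full subtree $D_{d_{\tau_2}}$ refines $R_{d_{\tau_1}}$ (possible by refining via Lemma~\ref{thm:cutoff-disjoint}), and correspondingly refine $d_{\tau_1}$ so that $R_{d_{\tau_1}} = D_{d_{\tau_2}}$. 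By well-definedness (Lemma~\ref{thm:well-defined}), $\varphi$ is unchanged under such refinements. With matching trees in place, the composite $\tau_1\tau_2$ has a disjoint decomposition whose domain tree is $D_{d_{\tau_1}}$, whose range tree is $R_{d_{\tau_2}}$, and whose leaf bijection is the composite of the two leaf bijections --- one checks this separately on $n$-leaves (where it reduces to composition of the $V$-parts) and on $p$-leaves (where it reduces to composition of the bijection parts $s_{\tau_1}, s_{\tau_2}$, using that $s_{\tau_i}$ maps the above-cutoff vertices of the domain bijectively onto those of the range). Applying the caret-replacement construction to these matched trees then yields exactly $(\widehat{D}_{d_{\tau_1}}, \widehat{R}_{d_{\tau_2}}, \sigma_{d_{\tau_1}}\sigma_{d_{\tau_2}})$, which by definition is $\varphi(\tau_1\tau_2)$ and also equals the product $\varphi(\tau_1)\varphi(\tau_2)$ in $V$.

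The main obstacle I anticipate is the bookkeeping in the homomorphism step: verifying that when $R_{d_{\tau_1}} = D_{d_{\tau_2}}$, the $p$-leaves (which carry the ``quasi-automorphism defect'' of each $\tau_i$) compose correctly under the caret construction. One must be careful that the bijection part $s_{\tau_1}$ sends the finitely many above-cutoff vertices of $D_{d_{\tau_1}}$ onto those of $R_{d_{\tau_1}}$, and that these are precisely the vertices that become $p$-leaves after applying Figure~\ref{fig:caret}; then the composition $s_{\tau_1}$ followed by $s_{\tau_2}$ on the $p$-leaves exactly mirrors the composition of quasi-automorphisms on those vertices. The $n$-leaf computation is essentially the standard fact that the tree-pair representation of $V$ is multiplicative under taking common refinements, so it is routine. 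As with the earlier lemmas, a \texttt{GAP} verification of the relations provides an independent check, but the conceptual argument above is what I would write out.
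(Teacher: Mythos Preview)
Your proposal is correct and follows essentially the same approach as the paper: match the range tree of the first disjoint decomposition with the domain tree of the second (the paper expands $D_{g_\lambda}$ to meet $R_{f_\tau}$, you expand $R_{d_{\tau_1}}$ to meet $D_{d_{\tau_2}}$, which is the symmetric choice), then check the composition separately on $n$-leaves and $p$-leaves, and for injectivity read off both the $V$-part and the bijection part from $\varphi(\tau)$ using the cutoff decomposition. The only point worth flagging is that after your refinement the domain of $d_{\tau_1}$ will no longer be a full subtree, so strictly speaking it is not a disjoint decomposition in the sense of Lemma~\ref{thm:disjoint-decomposition}; the paper encounters the identical issue on the other side and handles it ``by a slight abuse of notation,'' noting that the only property actually needed is that the map is an automorphism below the leaves --- you should make this explicit as well.
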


\begin{proof}
\emph{The map $\varphi$ is a group homomorphism}. Let $\tau, \lambda \in \quatT$ and let
$f_\tau$ and $g_\lambda$ be disjoint decompositions built so that the range tree $R_{f_\tau}$ contains the full subtree constituting the domain tree $D_{g_\lambda}$.
We can now unreduce the domain tree $D_{g_\lambda}$ to make it become equal to $R_{f_\tau}$ and we unreduce $R_{g_\lambda}$ accordingly to get a new tree $R$. We call $\overline{\sigma}_{g_\lambda}$ the bijection
on the vertices which one obtains from $\sigma_{g_\lambda}$ after this unreduction.
We define $g_\lambda' \in \quatT$ as the map which has a $V$-part defined 
by $(R_{f_\tau}, R, \overline{\sigma}_{g_\lambda})$ and a bijection part which is given by $g_\lambda$ on all vertices above the set
$\LF(R_{f_\tau})$. Therefore,
$g_\lambda' = g_\lambda$ as maps and, by a slight abuse of notation, we still say that $g_\lambda'$ is a ``disjoint''
decomposition of $\lambda$ as in this proof we only need the requirement that
$g_\lambda'$ is an automorphism on every vertex below $\LF(D_{g_\lambda'})$.
Therefore we have
\[
(D_{f_\tau}, R_{f_\tau}, \sigma_{f_\tau}) (D_{g_\lambda}, R_{g_\lambda}, \sigma_{g_\lambda})=
(D_{f_\tau}, R_{f_\tau}, \sigma_{f_\tau}) (R_{f_\tau}, R, \overline{\sigma}_{g_\lambda})=
(D_{f_\tau}, R, \sigma_{f_\tau} \overline{\sigma}_{g_\lambda})
\]
By construction, the tree pair $(D_{f_\tau}, R, \sigma_{f_\tau} \overline{\sigma}_{g_\lambda})$ 
has a domain tree which is a full subtree and is deep enough so that the action on all levels below is given by the composition
of the $V$-parts of $f_\tau$ and of $g_\lambda$. 
This implies that the tree pair $(D_{f_\tau}, R, \sigma_{f_\tau} \overline{\sigma}_{g_\lambda})$
constitutes the $V$-part of a disjoint decomposition for the element $\tau\lambda$ (which we denote by $d_{\tau \lambda}$), where the associated bijection part 
can be computed by composing the bijections occurring within the trees $D_{f_\tau}, R_{f_\tau}, R$.

Now observe that $\varphi(d_{\tau \lambda})$ is obtained by expanding the trees $D_{f_\tau}$ and $R$ and applying the $d_{\tau \lambda}$ to the $n$-leaves 
and the $p$-leaves.
We obtain the tree pair $(\widehat{D}_{f_\tau}, \widehat{R}, \sigma)=\varphi(d_{\tau \lambda})$.

We consider now the tree pairs $\varphi(f_{\tau})$ and $\varphi(g_{\lambda}')=\varphi(g_{\lambda})$. 
Since the range tree of the $V$-part of $g_\lambda'$ is $R$, it is clear that the range tree of 
$\varphi(g_{\lambda}')$ is equal to $\widehat{R}$.
Therefore, the domain tree of $\varphi(d_{\tau \lambda})$ and of 
$\varphi(f_{\tau})\varphi(g_{\lambda}')$ is equal and the corresponding range trees coincide.


We observe that the bijection on the $n$-leaves of $\varphi(d_{\tau \lambda})$ and 
$\varphi(f_{\tau})\varphi(g_{\lambda}')$ is exactly the same because the $V$-part of $d_{\tau \lambda}$ was built
by composing the tree pairs of the $V$-parts of $f_{\tau}$ and $g_{\lambda}'$. 
As we are dealing with disjoint decompositions, the actions above $\LF(D_{f_\tau})$ and $\LF(D_{g_\lambda'})$ do not affect the $V$-parts and so the bijections on the $p$-leaves of $\varphi(f_{\tau})$ and of $\varphi(g_{\lambda'})$ 
are exactly the bijections appearing in the bijection parts of $f_\tau$ and $g_\lambda'$. Thus the bijection on the $p$-leaves of 
$\varphi(f_{\tau})\varphi(g_{\lambda}')$ is determined by the composition of the 
bijection parts in $f_\tau$ and $g_\lambda'$ which is equal to the bijection part of $d_{\tau \lambda}$. Therefore the bijection on the $p$-leaves of 
$\varphi(f_{\tau})\varphi(g_{\lambda}')$ is equal to that of $\varphi(d_{\tau \lambda})$.
By putting everything together and using Lemma \ref{thm:well-defined} we deduce that $\varphi(d_{\tau \lambda}) =
\varphi(f_{\tau})\varphi(g_{\lambda}') = \varphi(f_{\tau})\varphi(g_{\lambda})$.

\bigskip
\noindent \emph{The map $\varphi$ is injective.} 
Let $d_\tau$ be the cutoff decomposition form for $\tau \in \ker \varphi$. 
The domain and the range tree of $\varphi(d_\tau)$ must be the equal and the permutation on every leaf is the identity permutation. Therefore 
the $V$-part of $d_\tau$ is the identity element and the bijection part is the identity map, that is $d_\tau$
is the identity map on $\cT$ and so $\ker \varphi = \{1_{\quatT}\}$.
\end{proof}

\section{Embedding Baumslag-Solitar groups in $V$\label{BSEmbeddings}}

In this section, we prove that the Baumslag-Solitar groups 
$$BS(m,n)=\langle a,b\mid  b^{-1}a^m b=a^n\rangle$$ fail to embed in R. Thompson's group $V$ whenever $|m|\neq |n|$ and that they do embed otherwise. 
The following result was observed by R\"over as a consequence of a result by Higman
\cite{hig}.

\begin{theorem}[R\"over, \cite{roverthesis}]
If $n$ is a proper divisor of $m$, then the group $BS(m,n)$ does not embed in Thompson's group $V$.
\end{theorem}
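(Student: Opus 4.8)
The plan is to produce, inside $BS(m,n)$ when $n$ is a proper divisor of $m$, a non-torsion element admitting roots of unboundedly large order, and then to contradict Higman's observation (the corollary to Lemma~9.3 of \cite{hig}) that a non-torsion element of a Higman--Thompson group cannot have infinitely many roots.

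First I would write $m=nk$ with $k\in\Z$ and $|k|\ge 2$, which is exactly what it means for $n$ to be a proper divisor of $m$. I would then set $c_j:=b^{-j}a^{n}b^{j}$ for $j\ge 0$, so that $c_0=a^{n}$, and use the defining relation $b^{-1}a^{m}b=a^{n}$ to compute, for every $j\ge 1$,
\[
c_j^{\,k}=b^{-j}a^{nk}b^{j}=b^{-j}a^{m}b^{j}=b^{-(j-1)}\bigl(b^{-1}a^{m}b\bigr)b^{\,j-1}=b^{-(j-1)}a^{n}b^{\,j-1}=c_{j-1}.
\]
Iterating this identity gives $a^{n}=c_0=c_j^{\,k^{j}}$ for every $j\ge 1$, and since $|k|\ge 2$ the exponents $|k|^{j}$ are unbounded, so $a^{n}$ is a $d$-th power in $BS(m,n)$ for infinitely many $d$.

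Next I would record the standard fact that $BS(m,n)$ is torsion-free --- it is an HNN extension of $\langle a\rangle\cong\Z$ with associated subgroups $\langle a^{m}\rangle$ and $\langle a^{n}\rangle$, and in particular $\langle a\rangle$ is infinite cyclic --- so $a^{n}\ne 1$ (as $n\ne 0$) is a non-torsion element. Thus $a^{n}$ is a non-torsion element of $BS(m,n)$ with roots of arbitrarily large order. Now suppose for contradiction that there is an embedding $\iota\colon BS(m,n)\hookrightarrow V$. Injectivity makes $\iota(a^{n})$ a non-torsion element of $V$, and $\iota(a^{n})=\iota(c_j)^{\,k^{j}}$ for all $j\ge 1$, so $\iota(a^{n})$ has roots of arbitrarily large order in $V$; this contradicts the corollary to Lemma~9.3 of \cite{hig}. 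Hence $BS(m,n)$ does not embed into $V$.

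The only point that needs care is matching the last step with the exact wording of Higman's corollary: one must confirm that ``is a $d$-th power for infinitely many $d$'' is precisely the situation it forbids. If the corollary is instead phrased in terms of infinitely many \emph{distinct} roots, this is automatic here, since for a non-torsion element two roots of different orders cannot coincide --- if $\iota(c_i)=\iota(c_j)=h$ with $i<j$ then $h^{\,|k|^{j}-|k|^{i}}$ would be trivial, forcing $\iota(a^{n})$ to have finite order --- so the $\iota(c_j)$ furnish infinitely many distinct roots. Everything else (the HNN normal-form facts and the single relation manipulation) is routine, so this is essentially the whole argument; the substantive input is Higman's lemma itself.
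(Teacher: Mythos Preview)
Your argument is correct and is precisely the approach the paper attributes to R\"over: exhibit a non-torsion element of $BS(m,n)$ with roots of unbounded order and invoke Higman's corollary to Lemma~9.3 that non-torsion elements of the Higman--Thompson groups have only finitely many roots. The paper does not spell out the details beyond that one sentence, so you have faithfully reconstructed the intended proof; the only cosmetic slip is that in the distinctness check the exponent should be $k^{j}-k^{i}$ rather than $|k|^{j}-|k|^{i}$, but this does not affect the conclusion.
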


Recall that Farb and Franks 
have embedding results in groups related to Thompson groups.  More precisely,
they show the following 
\begin{theorem}[Farb-Franks, \cite{FarbFranksI}]
Let $m,n$ be positive integers.
\begin{enumerate}
\item If $m>n\ge1$ the group $BS(m,n)$ embeds in the group of 
orientation-preserving analytic
diffemorphisms $\mathrm{Diff}_+^\omega(\mathbb{R})$ and
also inside the
groups of orientation-preserving homeomorphisms $\mathrm{Homeo}_+(S^1)$
and $\mathrm{Homeo}_+(I)$. 
\item If $m> n>1$, the group $BS(m,n)$ does not embed into $\mathrm{Diff}_+^2(I)$.
If $n$ does not divide $m$, the group $BS(m,n)$ does not
embed into $\mathrm{Homeo}_+(S^1)$.
\end{enumerate}
\end{theorem}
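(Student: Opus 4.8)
The plan is to handle the embedding statement (1) and the two non-embedding statements in (2) by completely different machinery: explicit one-dimensional dynamics for (1), and rigidity/invariance theorems for (2).

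For (1), I would first record the ``linear shadow''. Writing $BS(m,n)=\langle a,b\mid b^{-1}a^mb=a^n\rangle$, the assignment $a\mapsto(x\mapsto x+1)$, $b\mapsto(x\mapsto(m/n)x)$ solves the functional equation $b(x+n)=b(x)+m$ and so furnishes an affine action on $\mathbb{R}$; but this action is faithful only on the metabelian quotient of $BS(m,n)$, so it does not suffice once $m,n>1$. The substantive step is to deform this picture into a \emph{faithful} real-analytic action. I would take $b$ to be an orientation-preserving analytic diffeomorphism of $\mathbb{R}$ with a single hyperbolic attracting fixed point of multiplier $n/m<1$ (this is exactly where $m>n$ is used), so that the complement of the fixed point is tiled by fundamental domains $J_k$ with $b(J_k)=J_{k+1}$, and then build $a$ from an analytic bump supported in one fundamental domain, propagated over the $b$-orbit of domains so that the relation $b^{-1}a^mb=a^n$ holds \emph{by construction}. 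Faithfulness would then be verified by a ping-pong/normal-form argument matched to the HNN (Bass--Serre) structure of $BS(m,n)$: a reduced word in $a,b$ nontrivially displaces a suitably chosen test interval. Finally I would transfer the action: identifying $\mathbb{R}\cong(0,1)$ and declaring the endpoints fixed yields an embedding into $\mathrm{Homeo}_+(I)$, and embedding $(0,1)\hookrightarrow S^1$ with the complementary point fixed yields an embedding into $\mathrm{Homeo}_+(S^1)$.

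For the non-embedding into $\mathrm{Diff}_+^2(I)$ when $m>n>1$, I would argue by contradiction from a putative embedding with $I=[0,1]$ and endpoints globally fixed. Differentiating $b^{-1}a^mb=a^n$ at a common fixed point $p$ of $a$ and $b$ gives, by the chain rule, $a'(p)^m=a'(p)^n$, hence $a'(p)^{m-n}=1$ and so $a'(p)=1$; thus $a$ is \emph{parabolic} at every such fixed point. The engine is then Kopell's Lemma and the rigidity of $C^2$ germs tangent to the identity: on a component $(p,q)$ of the complement of $\mathrm{Fix}(a)$, the $C^2$ diffeomorphism $a$ carries a well-defined asymptotic (Mather/Szekeres-type) invariant of its parabolic germ at $p$, which is a conjugacy invariant yet which the relation $b^{-1}a^mb=a^n$ forces to be rescaled by the factor $m/n\neq1$ --- the contradiction. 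I expect the main obstacle here to be isolating this invariant precisely and showing simultaneously that conjugation by the ambient $C^2$ map $b$ must preserve it while the Baumslag--Solitar relation (with $n>1$, so the HNN extension is genuinely non-ascending) demands it change, \emph{and} pinning down where $C^2$ regularity is indispensable, since the analogous statement fails in class $C^1$ --- indeed part (1) produces a $C^\omega$ action, so any naive $C^1$ version would be false.

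For the non-embedding into $\mathrm{Homeo}_+(S^1)$ when $n\nmid m$, I would begin with the Poincar\'e rotation number $\mathrm{rot}\colon\mathrm{Homeo}_+(S^1)\to\mathbb{R}/\mathbb{Z}$, a conjugacy invariant satisfying $\mathrm{rot}(f^k)=k\,\mathrm{rot}(f)$. Applying it to $b^{-1}a^mb=a^n$ gives $m\,\mathrm{rot}(a)=n\,\mathrm{rot}(a)$ in $\mathbb{R}/\mathbb{Z}$, so $(m-n)\mathrm{rot}(a)\in\mathbb{Z}$ and $\mathrm{rot}(a)$ is rational; hence $a$ has a finite periodic orbit $P$. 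Passing to this finite invariant set reduces the dynamics to a combinatorial one on which $b$ must conjugate the cyclically ordered periodic-point data of $a^m$ to that of $a^n$, and the plan is to extract from $n\nmid m$ a conjugation-invariant count (of periodic points, or of their permutation type under the relation) that $b$ must preserve but that the relation forces to differ. I anticipate the hard part to be converting the divisibility hypothesis $n\nmid m$ into a clean invariant of the finite configuration, and --- since the circle embedding in (1) is obtained with a \emph{global} fixed point (where the rotation-number obstruction is vacuous) --- I would make explicit that the obstruction here rules out circle actions without a global fixed point, so that (2) is compatible with (1) once the two $S^1$ statements are read as referring to these different regimes.
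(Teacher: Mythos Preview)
The paper does not prove this theorem at all: it is quoted as a result of Farb and Franks, with a citation to \cite{FarbFranksI}, and is used only as contextual evidence motivating the expectation that most Baumslag--Solitar groups should fail to embed in $V$. There is therefore no ``paper's own proof'' to compare your proposal against.

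That said, a brief comment on your proposal itself. Your outline for part (1) is broadly the right shape and is indeed close in spirit to the Farb--Franks construction, but there is a real technical gap: you propose to build $a$ from an \emph{analytic} bump supported in a single fundamental domain and then propagate it, yet a nontrivial real-analytic diffeomorphism of $\mathbb{R}$ cannot have compact support. The actual construction has to produce $a$ as a genuinely global analytic object (for instance via an explicit formula or a convergent infinite composition) rather than by gluing locally supported pieces, and this is where most of the work lies. For the $C^2$ non-embedding your invocation of Kopell-type rigidity is the correct engine, though the argument is more delicate than your sketch suggests. For the $\mathrm{Homeo}_+(S^1)$ non-embedding, your rotation-number reduction is the right first move, but your final paragraph hints at a genuine confusion: part (1) already gives an embedding of $BS(m,n)$ into $\mathrm{Homeo}_+(S^1)$ for all $m>n\ge 1$, including cases with $n\nmid m$, so the statement in (2) as written cannot be read as an unconditional non-embedding into $\mathrm{Homeo}_+(S^1)$ --- you should consult the original Farb--Franks paper for the precise hypotheses rather than the abbreviated form quoted here.
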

Taking the Farb and Franks result together with the R\"over result, we have some evidence that most Baumslag-Solitar groups do not embed in $T$, and probably also in $V$.  Thus, it is somewhat natural that only if $|m|= |n|$ should one expect that $BS(m,n)$ might embed  in $V$ (well, in R. Thompson's group $T$ at the least).  In this section, we show that these indications do not mislead.

 The key idea behind the main result of this section is the following:

\begin{lemma}\label{conjugationAndPowers}
Let $v\in V$ be non-torsion, and $r$ and $s$ be integers.  Then whenever $(v^r)^w=v^s$ for some $w\in V$, we have that $|r|=|s|$.
\end{lemma}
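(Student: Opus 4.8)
The plan is to exploit the structure of non-torsion elements of $V$ via their action on the Cantor set $\mathfrak{C}=\{0,1\}^\omega$, using the fact (due to Higman, as recalled in the introduction) that such elements do not have infinitely many roots, together with a finer "flow" or "germ" analysis near the periodic/repelling-attracting dynamics of $v$. Concretely, if $v\in V$ is non-torsion, then $v$ has a point $x\in\mathfrak C$ which is not periodic under $v$ but whose orbit's dynamics are eventually controlled by an attracting/repelling pair of fixed points of some power of $v$; more usefully, there is a well-defined notion of a "rotation number" or, better, a conjugation-invariant integer attached to each such dynamical component. First I would reduce to the case $r,s>0$ (replacing $v$ by a power and handling signs of $r,s$ separately using $(v^{-1})^w$), and recall that $v^r$ and $v^s$ are non-torsion since $v$ is.

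The key step: attach to a non-torsion $v\in V$ a finite multiset of positive integers — for each "attracting--repelling gap" of $v$ (a maximal interval/cone on which $v^k$ for suitable $k$ has an attracting fixed point on one end and a repelling fixed point on the other, with no fixed points in between), record the minimal such $k$. Call this the \emph{revealing data} of $v$. Then I would argue two things. (1) This data is a conjugacy invariant: if $v^w=v'$ then $w$ carries the attracting--repelling gaps of $v$ bijectively to those of $v'$ preserving the associated integers, since $w$ conjugates the dynamics. (2) Passing from $v$ to $v^r$ multiplies each of these integers appropriately: more precisely, on a gap with data $k$, the element $v^r$ restricted there has attracting/repelling dynamics with associated integer $k/\gcd(k,r)$ and the number of gaps into which it splits is $\gcd(k,r)$; in particular the \emph{number of gaps}, counted with the data, transforms in a controlled way. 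The cleanest invariant to use is probably: let $N(v)$ be the total number of attracting--repelling gaps of $v$ (a nonneg integer, zero iff $v$ is torsion), and show $N(v^r)=|r|\cdot N(v)$ for $r\neq 0$ when... — careful: this needs $v$ restricted so that each gap is "primitive", which one can always arrange since only finitely many gaps occur and one may pass to a suitable power; alternatively work gap-by-gap and track $\gcd$'s.

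Granting (1) and (2): from $(v^r)^w=v^s$ and conjugacy-invariance we get that $v^r$ and $v^s$ have the same revealing data, hence in particular the same total count, so $N(v^r)=N(v^s)$. Combined with the transformation law $N(v^t)$ being (after passing to the appropriate primitive power, or working gap-wise with $\gcd(k,t)$) a strictly monotone function of $|t|$ in the relevant range — or more robustly, using that the multiset of integers $\{k/\gcd(k,r)\}$ and $\{k/\gcd(k,s)\}$ agree for all gaps — one deduces $|r|=|s|$. The main obstacle I expect is making step (2) genuinely precise and conjugacy-canonical: the naive "number of gaps" is not quite multiplicative because of the $\gcd$ phenomenon, so the honest argument must track the full multiset of germ-exponents at the attracting/repelling fixed points (these are the local multipliers, which in $V$ are of the form: the cone contracts by a fixed prefix-substitution of some length $\ell$, and $\ell$ scales as $r\cdot\ell$ under taking $r$-th powers on a primitive gap). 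Pinning down "the local multiplier at a repelling fixed point of $v$ is a conjugacy invariant and scales linearly under powers" is the crux; once that is in hand, equality of multisets for $v^r$ and $v^s$ forces $|r|=|s|$ directly, since $r\mapsto r\ell$ is injective on positive integers for fixed $\ell>0$.
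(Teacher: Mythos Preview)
Your proposal eventually lands on exactly the paper's argument: the conjugacy-invariant you want is the multiset of local multipliers (equivalently, $\log_2$ of slopes) at the attracting/repelling fixed points, which scales linearly under powers, so $(v^r)^w=v^s$ forces the multisets $r\cdot\mathcal S_1$ and $s\cdot\mathcal S_1$ to agree and hence $|r|=|s|$. The first half of your write-up, however, is a detour: the ``number of gaps'' and the associated period data do \emph{not} transform cleanly under powers (as you yourself note via the $\gcd$ issue), and the paper simply bypasses this by never introducing that invariant. The clean device you are groping toward with ``pass to a primitive power'' is made explicit in the paper: set $\alpha=v^m$ where $m$ is the lcm of the finite orbit lengths, so that $\langle\alpha\rangle$ has no nontrivial finite orbits and every important point is an honest fixed point of $\alpha$; then $(\alpha^r)^w=\alpha^s$, the log-slope set $\mathcal S_u=\{\log_2(\alpha^u)'(p):p\in I(\alpha)\}$ satisfies $\mathcal S_u=u\cdot\mathcal S_1$, conjugation by $w$ preserves this set, and comparing the maximum absolute value in $\mathcal S_r=\mathcal S_s$ gives $|r|k=|s|k$ with $k>0$. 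So your crux is right; drop the gap-counting and go straight to slopes.
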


The essence of the argument below will be clear to any reader who has digested the material on revealing pairs for elements of $V$ (for instance, as presented in 
\cite{matucci8}, which has an expository section written to explain these objects).  Note that revealing pairs are introduced by Brin in \cite{brin4}, and that Higman in \cite{hig} had already developed an analogous technology.

\begin{proof}
If two elements $x,y \in V$, we use the conjugation notation $x^y=y^{-1}xy$.
Assume that $v$, $w\in V$, $v$ is not torsion, and that there are integers $r$ and $s$ so that $(v^r)^w=v^s.$  We will now show that $|r|=|s|$. 
Note that if $r=0$ the lemma result is immediately true, as $v^0=1_V$.  Hence, we will assume below that neither $r$ nor $s$ is zero.  

As $v$ is not torsion, by an extension of an argument of Brin in \cite{brin4} there is a minimal positive integer $m$ so that, if we set $\alpha:=v^m$, then $\langle \alpha\rangle$ is an infinite cyclic group embedded in $V$ with the property that this group acts with no non-trivial finite orbits on the Cantor set $\CS_2$. For example, a possibility for $m$ is the least common multiple of the set of lengths of all finite periodic orbits, as there are only finitely many such lengths. 
Now, the element $\alpha$ admits a finite set of points $I(\alpha)$, which we will call the \emph{important points of $\alpha$} (following \cite{matucci8}), consisting of the repelling and attracting points in the Cantor set under the action of $\langle \alpha\rangle$.  For each point in the set $I(\alpha)$, it is the case that $\alpha$ restricted to some small 
interval $U_p$ containing $p$ 
is an affine map which fixes exactly the point $p$, where the slope of this map is $2^{s_p}$, for some  $s_p$ a fixed non-zero integer.
Now, as $(v^{r})^w=v^s$ we see that $$(\alpha^r)^w=((v^m)^r)^w=((v^r)^w)^m= (v^s)^m=(v^m)^s=\alpha^s$$ as well. 

For any integer $u \in \mathbb{Z} \setminus \{0\}$, 
consider the finite set of logarithms of derivatives
\[
\mathcal{S}_u = 
\left\{ \log_2 \frac{d}{dt} \alpha^u \Big\vert_{t=x} \; \; \mid \; \; x \in I(\alpha) \right\}.
\]
Observe that $\mathcal{S}_u=u \cdot \mathcal{S}_1
=\{u v \mid v \in \mathcal{S}_1 \}$.
The equation above implies that $w$ sends $\mathcal{S}_r$ to $\mathcal{S}_s$. 
It is straightforward to verify that the finite set of slopes of $(\alpha^r)^w$ on $I(\alpha)$
is exactly equal to $\mathcal{S}_r$.
Therefore
\[
r \cdot \mathcal{S}_1 = \mathcal{S}_r = \mathcal{S}_s = s \cdot \mathcal{S}_1
\]
and thus, if $k = \max\{|v| \mid v \in \mathcal{S}_1 \}>0$, one has
\[
|r| k = \max \{|r v| \mid v \in \mathcal{S}_1\}=
\max \{|s v| \mid v \in \mathcal{S}_1\} = |s| k
\]
By the cancellation law, we have $|r|=|s|$.
\end{proof}

\begin{remark}
We would like to make a small historical comment.  The core idea behind the proof 
of Lemma \ref{conjugationAndPowers} is that the product of the set of slopes of the affine restrictions of $v$ in a small neighbourhood of an orbit of a repelling or attracting periodic point (of $\CS_2$  associated with  the action of $\langle v\rangle$) is an invariant of conjugacy in $V$ of $v$. (See, e.g., \cite{matucci9, gusa1, hig, salazar1} for solutions of the conjugacy problem for $V$).  One can think of the result as a total ``speed'' along an orbit, and it is useful in many ways. For instance, this product-of-slopes calculation for an orbit is used in section 7 of
\cite{matucci8} to analyse element centralisers.  As described later in this section, it is also used in \cite{matucci8} to show that all cyclic subgroups are undistorted in $V$.
\end{remark}

We are now ready to give a proof of Theorem \ref{BS-groups}.

\medskip
\noindent
\textbf{Theorem \ref{BS-groups}.}
\emph{
Let $m,n \in \mathbb{Z} \setminus \{ 0 \}$. Let $BS(m,n)$ be the corresponding
Baumslag-Solitar group.
\begin{enumerate}
\item If $|m| \ne |n|$, then $BS(m,n)$ fails to embed in $V$.
\item If $|m| = |n|$, then there is an embedding of $BS(m,n)$ in $V$.
\end{enumerate}
}

\begin{proof}
Part (1) follows immediately by Lemma \ref{conjugationAndPowers}, since
the element $a \in BS(m,n)$ is non-torsion.

For part (2) we rely on the following observation which we learned from Yves de Cornulier
on MathOverflow \cite{cornulier-mathoverflow}.
Let $e \in \{-1,1\}$ and consider the group $BS(m,em)$. We now 
consider the diagonal embedding of $BS(m,em)$ 
into the direct product $\left(\mathbb{Z} \ast (\mathbb{Z}/m\mathbb{Z})\right)\times 
\left(\mathbb{Z}\rtimes_{\varphi}\mathbb{Z} \right)$.
The embedding of $BS(m,em)$ in the two coordinates is seen as follows:
\begin{itemize}
\item 
the left homomorphism is given by modding out by the common subgroup 
$\langle a^m \rangle$ inside $BS(m,em)$,
\item the right homomorphism is the homomorphism
from $BS(m,em)=\langle a,b \rangle$ to $BS(1,e)=\langle a',b' \rangle
\cong \left(\mathbb{Z}\rtimes_{\varphi}\mathbb{Z} \right)$, where 
$a \mapsto a'$ and $b \mapsto b'$.
\end{itemize}
It is straightforward to see that the intersection of the kernels of the two coordinates
is trivial.
Now, the groups $\mathbb{Z}$ and $\mathbb{Z}/m\mathbb{Z}$ are demonstrative
subgroups of $V$
(see Bleak and Salazar \cite{bleak-salazar1} for the definition of \emph{demonstrative
subgroups}), so Theorem 1.4 in \cite{bleak-salazar1} implies
that $\mathbb{Z} \ast (\mathbb{Z}/m\mathbb{Z})$ is a subgroup of $V$.

Separately, we need to discuss the following two cases:
\begin{itemize}
\item the group $BS(1,1)$, which is isomorphic to $\mathbb{Z}^2$ is a well-known subgroup of $V$, and
\item the group $BS(1,-1)$ which is the Klein bottle group.

Noting that there is a double cover of the torus to the Klein bottle, 
the Klein bottle group is a finite extension of the group
$\mathbb{Z}^2 \le V$ and therefore $BS(1,-1)$ is also
a subgroup of $V$.
\end{itemize}
Therefore, $BS(m,em)$ embeds into the group 
$\left(\mathbb{Z} \ast (\mathbb{Z}/m\mathbb{Z})\right)\times 
\left(\mathbb{Z}\rtimes_{\varphi}\mathbb{Z} \right)$ which is a subgroup of $V \times V$
which, in turn, is a subgroup of $V$.
\end{proof}

%

We are grateful to Jos\'e Burillo who pointed out that one can sometimes use an argument based on distortion to show certain non-embedding results.  Indeed, this type of argument can be used in our context to show the main non-embedding results of Theorem \ref{BS-groups}.  Here is how the argument runs.

First, recall Theorem 1.3 in \cite{matucci8} which says that all cyclic groups are undistorted in $V$.  Now recall that when $|m|\neq |n|$, the group $BS(m,n)$ has distorted cyclic subgroups.  Now we observe that if $BS(m,n)$ were a subgroup of $V$ and
$v \in BS(m,n)$ were an element of infinite order, then the distortion of $\langle v \rangle$
in $V$ would have to be at least as much as it is in $BS(m,n)$ and that yields a contradiction,
therefore implying that for such $m$ and $n$, $BS(m,n)$ does not embed in $V$.

We observe that this argument is fundamentally  equivalent to the one given in the proof of Lemma \ref{conjugationAndPowers};  to prove cyclic groups are undistorted in $V$, one measures the `speed' of the elements in the cyclic subgroup, near to their attracting and repelling orbits.

\bibliographystyle{amsplain}
\bibliography{go}

\providecommand{\bysame}{\leavevmode\hbox to3em{\hrulefill}\thinspace}
\providecommand{\MR}{\relax\ifhmode\unskip\space\fi MR }
\providecommand{\MRhref}[2]{%
  \href{http://www.ams.org/mathscinet-getitem?mr=#1}{#2}
}
\providecommand{\href}[2]{#2}
\begin{thebibliography}{10}

\bibitem{cornulier-mathoverflow}
{\tt{\href{http://mathoverflow.net/questions/119719/when-is-a-baumslag-solitar-group-linear}{http://mathoverflow.net/questions/119719/when-is-a-baumslag-solitar-group-linear}}}.

\bibitem{anisimov}
A.~V. An{\={\i}}s{\={\i}}mov, \emph{Certain algorithmic questions for groups
  and context-free languages}, Kibernetika (Kiev) (1972), no.~2, 4--11.
  \MR{0312774 (47 \#1329)}

\bibitem{matucci9}
J.M. Belk and F.~Matucci, \emph{Conjugacy and dynamics in {T}hompson's groups},
  Geometriae Dedicata, \emph{to appear}

\bibitem{bestvina-questions}
Mladen Bestvina, \emph{{Q}uestions in geometric group theory}, web page:
  (2013),
  {\tt{\href{http://www.math.utah.edu/~bestvina/eprints/questions-updated.pdf}{http://www.math.utah.edu/~bestvina/eprints/questions-updated.pdf}}}.

\bibitem{matucci8}
C.~Bleak, H.~Bowman, A.~Gordon, G.~Graham, J.~Hughes, F.~Matucci, and E.~Sapir,
  \emph{Centralizers in {R}. {T}hompson's group ${V}_n$}, Groups Geom. Dyn.,
  \emph{to appear}.

\bibitem{bleak-salazar1}
Collin Bleak and Olga Salazar-D{\'{\i}}az, \emph{Free products in {R}.
  {T}hompson's group {$V$}}, Trans. Amer. Math. Soc. \textbf{365} (2013),
  no.~11, 5967--5997. \MR{3091272}

\bibitem{brin4}
Matthew~G. Brin, \emph{Higher dimensional {T}hompson groups}, Geom. Dedicata
  \textbf{108} (2004), 163--192. \MR{MR2112673 (2005m:20008)}

\bibitem{calegari-walker-2}
D.~Calegari and Walker. A., \emph{Random groups contain surface subgroups},
  preprint,
  {\tt{\href{http://arxiv.org/abs/1304.2188}{arXiv:math.GR/1304.2188}}}.

\bibitem{calegari-walker-1}
\bysame, \emph{Surface subgroups from linear programming}, preprint,
  {\tt{\href{http://arxiv.org/abs/1212.2618}{arXiv:math.GR/1212.2618}}}.

\bibitem{corwinthesis}
Nathan Corwin, \emph{Embedding and non-embedding results for {R}. {T}hompson's
  group {$V$} and related groups}, Ph.D. thesis, University of Nebraska -
  Lincoln, 2013.

\bibitem{dunwoody1}
M.~J. Dunwoody, \emph{The accessibility of finitely presented groups}, Invent.
  Math. \textbf{81} (1985), no.~3, 449--457. \MR{807066 (87d:20037)}

\bibitem{FarbFranksI}
Benson Farb and John Franks, \emph{Groups of homeomorphisms of one-manifolds,
  i: actions of nonlinear groups}, preprint,
  {\tt{\href{http://arxiv.org/abs/math.DS/0107085}{arXiv:math.DS/0107085}}}.

\bibitem{gusa1}
Victor Guba and Mark Sapir, \emph{Diagram groups}, Mem. Amer. Math. Soc.
  \textbf{130} (1997), no.~620, viii+117. \MR{MR1396957 (98f:20013)}

\bibitem{hig}
Graham Higman, \emph{Finitely presented infinite simple groups}, Department of
  Pure Mathematics, Department of Mathematics, I.A.S. Australian National
  University, Canberra, 1974, Notes on Pure Mathematics, No. 8 (1974).
  \MR{MR0376874 (51 \#13049)}

\bibitem{holt-owens-thomas}
D.~F. Holt, M.~D. Owens, and R.~M. Thomas, \emph{Groups and semigroups with a
  one-counter word problem}, Journal of the {A}ustralian {M}athematical
  {S}ociety \textbf{85} (2008), 197--209.

\bibitem{horerocfconj}
D.~F. Holt, S.~Rees, and C.~E. R\"over, \emph{Groups with context-free
  conjugacy problems}, International Journal of Algebra and Computation
  \textbf{21} (2011), no.~01n02, 193--216.

\bibitem{horoind}
D.~F. Holt and C.~E. R\"over, \emph{Groups with indexed co-word problem},
  International Journal of Algebra and Computation \textbf{16} (2006), no.~05,
  985--1014.

\bibitem{holtrerotho}
Derek~F. Holt, Sarah Rees, Claas~E. R{\"o}ver, and Richard~M. Thomas,
  \emph{Groups with context-free co-word problem}, J. London Math. Soc. (2)
  \textbf{71} (2005), no.~3, 643--657. \MR{2132375 (2006a:20065)}

\bibitem{Hopcroft+Ullman/79/Introduction}
John~E. Hopcroft and Jeff~D. Ullman, \emph{Introduction to automata theory,
  languages, and computation}, Addison-Wesley Publishing Company, 1979.

\bibitem{lehnschweitz1}
J.~Lehnert and P.~Schweitzer, \emph{The co-word problem for the
  {H}igman-{T}hompson group is context-free}, Bull. Lond. Math. Soc.
  \textbf{39} (2007), no.~2, 235--241. \MR{MR2323454 (2008f:20064)}

\bibitem{LSPersonalCommunication}
\bysame, personal communication (2009).

\bibitem{LehnertDissertation}
J{\"o}rg Lehnert, \emph{Gruppen von quasi-automorphismen}, 2008, Goethe
  Universit\"at, Frankfurt.

\bibitem{muller-schupp-1}
David~E. Muller and Paul~E. Schupp, \emph{Groups, the theory of ends, and
  context-free languages}, J. Comput. System Sci. \textbf{26} (1983), no.~3,
  295--310. \MR{710250 (84k:20016)}

\bibitem{muller-schupp-2}
\bysame, \emph{The theory of ends, pushdown automata, and second-order logic},
  Theoret. Comput. Sci. \textbf{37} (1985), no.~1, 51--75. \MR{796313
  (87h:03014)}

\bibitem{roverthesis}
Claas R\"over, \emph{Subgroups of finitely presented simple groups}, Ph.D.
  thesis, University of Oxford, 1999.

\bibitem{salazar1}
Olga~Patricia Salazar-D{\'{\i}}az, \emph{Thompson's group {$V$} from a
  dynamical viewpoint}, Internat. J. Algebra Comput. \textbf{20} (2010), no.~1,
  39--70. \MR{2655915 (2011k:37032)}

\bibitem{walkerthesis}
Alden Walker, \emph{Surface maps into free groups}, Ph.D. thesis, California
  Institute of Technology, 2012.

\end{thebibliography}

\end{document}